\newtheorem{thm}{Theorem}[section]
\newtheorem{lem}{Lemma}[section]
\theoremstyle{remark}
\newtheorem{rmk}{Remark}[section]
\numberwithin{equation}{section}
\theoremstyle{definition}
\newcommand{\C}{\ensuremath{\mathbb{C}}}
\newcommand{\N}{\ensuremath{\mathbb{N}}}
\newcommand{\R}{\ensuremath{\mathbb{R}}}
\newcommand{\rS}{\ensuremath{\mathbb{S}}}
\newcommand{\mC}{\mathcal{C}}
\newcommand{\mL}{\mathcal{L}}
\newcommand{\na}{\nabla}
\newcommand{\la}{\langle}
\newcommand{\ra}{\rangle}
\newcommand{\lla}{\left \langle}
\newcommand{\rra}{\right \rangle}
\newcommand{\lp}{\left(}
\newcommand{\pa}{\partial}
\newcommand{\rp}{\right)}
\newcommand{\vphi}{\varphi}
\newcommand{\tr}{\text{tr}}
\newcommand{\vol}{\text{vol}}
\newcommand{\graph}{\textup{graph}}
\numberwithin{equation}{section}
\begin{document}

\title[Convexity of mean convex self-expanders]{Convexity of mean convex asymptotically conical self-expanders to the mean curvature flow}
\author{Junming Xie}

\address{Department of Mathematics, Rutgers University, Piscataway, NJ 08854}
\email{junming.xie@rutgers.edu}

\begin{abstract}
	In this paper, we investigate the convexity of mean convex asymptotically conical self-expanders to the mean curvature flow in $\R^{n+1}$. Specifically, for $n\geq 3$, we show that any $n$-dimensional complete mean convex self-expander asymptotic to mean convex and weakly convex cones must be strictly convex.
\end{abstract}
\maketitle

\section{Introduction}
A {\it self-expander} is an immersed hypersurface $\Sigma^n$ in $\R^{n+1}$ for which there exists an immersion $x: \Sigma^n \rightarrow \R^{n+1}$ satisfying the equation
\begin{equation} \label{eq:selfexpander}
	H =\la x, \nu \ra,
\end{equation}
where $\nu$ is the unit normal vector and $H$ denotes the mean curvature. Equivalently, a self-expander $\Sigma^n$ in $\R^{n+1}$ is a critical point of the weighted functional defined on any bounded set $K\subset \Sigma^n$ by
\begin{equation*}
	\int_K e^{\frac{|x|^2}{2}} d\vol,
\end{equation*}
where $d\vol$ is the volume element of $\Sigma^n$. Self-expanders arise naturally in the study of mean curvature flow because they are self-similar solutions. Indeed, $\Sigma^n$ is a self-expander if and only if the family of homothetic hypersurfaces
\begin{equation*}
	\{\Sigma_t\}_{t>0} = \left\{\sqrt{t}\Sigma \right\}_{t>0}
\end{equation*}
is a {\it mean curvature flow} satisfying the equation
\begin{equation*}
	\left(\frac{\pa x}{\pa t}\right)^{\perp} = H\nu,
\end{equation*}
for $x\in \Sigma_t$. Here, $v^{\perp}$ denotes the normal part of a vector $v$.

Self-expanders play an important role in the study of the mean curvature flow. In particular, they model the long-time behavior of the flows starting from entire graphs. For example, Ecker-Huisken \cite{EH:89} showed that, under certain conditions on the initial hypersurface near infinity, the solution of the mean curvature flow converges exponentially fast to a self-expander. This result was later extended by Stavrou \cite{Stavrou:98}, under a weaker assumption that the initial hypersurface has a unique tangent cone at infinity. More recently, Rasul \cite{Rasul:10} proved that under an alternative condition at infinity and assuming bounded gradient, the rescaled graphs converge to self-expanders at a rate of polynomial in time. On the other hand, self-expanders also model the behavior of the mean curvature flow emerging from conical singularities (see, e.g., \cite{ACI:95,Ilmanen:95}).

It is well known that there is no compact self-expander. By contrast, numerous examples of {\it asymptotically conical self-expanders}--that is, self-expanders asymptotic to a cone at infinity--have been constructed; see Section \ref{sec:2.2} for the precise definition. For instance, Ecker-Huisken \cite{EH:89} observed that for each rotationally symmetric cone $\mC \subset \R^{n+1}$, there exists a unique graphical self-expander asymptotic to $\mC$. For more general cones, Ding \cite{Ding:20} showed that any $C^{3,\alpha}$-regular, weakly mean convex, and non-minimizing cone $\mC \subset \R^{n+1}$ admits a unique smooth mean convex self-expander $\Sigma^n$ with tangent cone $\mC$ at infinity (see also Ilmanen \cite{Ilmanen:95}). Here and throughout the paper, {\it (weakly) mean convex} means that the mean curvature is positive (nonnegative), that is, $H>0$ (respectively, $H\geq 0$). Subsequently, Wang \cite{ZHWang:24} extended Ding's result to arbitrary weakly mean convex and non-minimizing cones without requiring any regularity assumption, though the uniqueness remains unclear in the non-smooth case. Very recently, Shao-Zou \cite{Shao-Zou:25} constructed asymptotically conical self-expanders of any prescribed positive genus in $\mathbb{R}^3$. In particular, the mean curvature flow associated with such an asymptotically conical self-expander flows out of its asymptotic cone.

Moreover, Angenent-Chopp-Ilmanen \cite{ACI:95} constructed self-expanders asymptotic to rotationally symmetric double cones in $\R^{n+1}$. Specifically, consider the double cones
\begin{equation*}
	\mC_{\alpha} = \{x^2_{n+1}\sin^2 \alpha = (x_1^2 + \cdots + x_n^2)\cos^2\alpha\},
\end{equation*}
where $\alpha \in (0,\pi/2)$ denotes the angle with respect to $x_{n+1}$-axis. For each angle $\alpha$, there exists a disconnected rotationally symmetric self-expander asymptotic to $\mC_{\alpha}$. They also found a critical angle $\alpha_{crit} \approx 66^{\circ} $, such that for $\alpha_{crit} < \alpha <\pi/2$, a connected rotationally symmetric self-expander with the same asymptotic cone exists. Later, Helmensdorfer \cite{Helmensdorfer:12} proved the existence of a second connected rotationally symmetric self-expander asymptotic to $\mC_{\alpha}$ for $\alpha_{crit} < \alpha <\pi/2$. These examples demonstrate that, in general, the uniqueness of self-expanders asymptotic to a given cone cannot be expected. In fact, Bernstein-Wang \cite{BW:23} showed that such non-uniqueness is generic: there exists an open set of cones in $\R^3$ for which each cone admits at least three distinct self-expanders asymptotic to the same cone, including two topological connected annuli and one consisting of a pair of disconnected disks. For a generalization of this phenomenon to higher dimensions $2 \leq n \leq 6$, see Bernstein-Wang \cite{BW:22a}.

In recent years, some other advances in self-expanders were obtained under the assumption of being asymptotically conical. For example, Fong-McGrath \cite{FongM:19} proved that mean convex self-expanders that are asymptotic to $O(n)$-invariant cones must be rotationally symmetric. More recently, in \cite{BW:21a,BW:21b}, Bernstein-Wang studied the spaces of asymptotically conical self-expanders. In particular, they showed that this space is a smooth Banach manifold and established compactness in the locally smooth topology for certain natural families of asymptotically conical self-expanders, including the family of mean convex self-expanders asymptotic to uniformly mean convex cones. For additional works on self-expanders, see, e.g., \cite{ClutS:11,Lotay-Neves:13,Cheng-Zhou:18,Deruelle-S:20,BW:22b,Khan:23,BCW:24} and the references therein.

Regarding pinching estimates for self-expanders, Smoczyk \cite{Smoczyk:21} proved that any 2-dimensional properly immersed, mean convex, asymptotically conical self-expanding surface in $\R^3$ must be strictly convex. Partly motivated by this result, the author and Yu \cite{Xie-Yu:23} showed that any $n$-dimensional, $n\geq 3$, complete immersed two-sided 2-convex self-expander asymptotic to mean convex cones must be weakly convex. Here, a hypersurface $\Sigma^n$ is said to be {\it (weakly) 2-convex} if the sum of the two smallest principal curvatures is positive (nonnegative), i.e., $\kappa_1 + \kappa_2 > 0$ (respectively, $\kappa_1 + \kappa_2 \geq 0$), where $\kappa_1 \leq \kappa_2 \leq \cdots \leq \kappa_n$ are the ordered principal curvatures.

On the other hand, in the case of translators, Spruck-Xiao \cite{SX:20} proved that any 2-dimensional immersed two-sided, mean convex translator in $\R^3$ is weakly convex. This convexity theorem plays a key role in the classification of 2-dimensional graphical translators (see \cite{HIMW:19}). In higher dimensions, Spruck-Sun \cite{SS:21} showed that any $n$-dimensional, $n\geq 3$, immersed two-sided translator is convex under the assumption of {\it uniform 2-convexity}, meaning that $H > 0$ and $\kappa_1 + \kappa_2 \ge \beta H$ for some constant $\beta > 0$. More recently, inspired by the work of Spruck-Xiao \cite{SX:20} and based partly on a result of Derdzi\'nski \cite{De:80}, the author and Yu \cite{Xie-Yu:23} further extended these pinching estimates, showing that any $n$-dimensional, $n\geq 3$, complete immersed two-sided 2-convex translator is weakly convex. We note that the proof in \cite{Xie-Yu:23} remains valid under the weaker assumption that the translator satisfies $H>0$ and is weakly 2-convex.

In this paper, motivated by the work of Spruck-Xiao \cite{SX:20}, Xie-Yu \cite{Xie-Yu:23}, and Smoczyk \cite{Smoczyk:21}, we investigate the convexity of mean convex asymptotically conical self-expanders in higher dimensions. Our main result extends the convexity theorem of \cite{Xie-Yu:23} from the 2-convex to the mean convex case, and generalizes Smoczyk's 2-dimensional result to all higher dimensions.

\begin{thm} \label{thm:selfexpander}
	Let $\Sigma^n \subset \R^{n+1}$, $n\geq 3$, be a complete immersed two-sided mean convex self-expander asymptotic to mean convex and weakly convex cones. Then $\Sigma^n$ is strictly convex.
\end{thm}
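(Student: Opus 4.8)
The plan is to establish convexity via a tensor maximum principle applied to a suitably normalized second fundamental form, following the template of Spruck--Xiao and Xie--Yu but pushing from 2-convexity down to mere mean convexity. First I would recall the evolution equation satisfied by the second fundamental form $A$ on a self-expander: writing $\mathcal{L} = \Delta + \langle x, \nabla \cdot \rangle$ for the drift Laplacian adapted to the weight $e^{|x|^2/2}$, one has a Simons-type identity $\mathcal{L} A = |A|^2 A + A$ (schematically), so in particular $\mathcal{L} H = (|A|^2 + 1) H$. Since $H > 0$, one forms the normalized tensor $\widetilde{A} = A / H$ and computes its drift equation; the zeroth-order terms cancel in the right way and one is left with a strictly elliptic equation for $\widetilde{A}$ with a gradient term of the form $\tfrac{2}{H}\langle \nabla H, \nabla \widetilde{A}\rangle$, which is exactly the structure that permits a strong maximum principle for tensors. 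The goal is to show that the smallest eigenvalue $\kappa_1/H$ of $\widetilde{A}$, which is nonnegative somewhere or at least non-negative in the limit at infinity, is in fact everywhere positive.

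The key steps, in order: (1) derive the drift evolution/elliptic PDE for $\widetilde{A} = A/H$ and verify that the reaction term, after using the Codazzi equations and the null-eigenvector trick of Hamilton, is favorable, i.e., when $\kappa_1 = 0$ at a point the gradient terms controlled by $\nabla H$ force the relevant inequality; (2) use the asymptotically conical hypothesis together with the assumption that the asymptotic cones are weakly convex to control $\widetilde{A}$ near infinity — specifically, the principal curvatures of $\Sigma$ converge to those of the link of the cone (suitably rescaled), so $\kappa_1/H \to 0^{+}$ (weak convexity of the cone) at infinity, giving the needed barrier behavior so that $\inf_\Sigma \kappa_1/H$ is either attained at an interior point or approached at infinity with a nonnegative value; (3) apply the strong maximum principle for symmetric 2-tensors (Hamilton's version, valid in the complete non-compact setting after the asymptotic control rules out loss of the infimum at infinity) to conclude that either $\kappa_1/H \equiv 0$ identically, or $\kappa_1/H > 0$ everywhere; (4) rule out the identically-degenerate case $\kappa_1 \equiv 0$: here $\Sigma$ would split locally as a product $\Sigma^{n-1} \times \R$ by a standard de Rham / Hamilton splitting argument, but a self-expander equation $H = \langle x, \nu\rangle$ is incompatible with such a splitting given mean convexity (the line factor direction is parallel, forcing the position vector's normal component to be constant along the line, which contradicts the asymptotically conical normalization unless the whole thing degenerates) — so strict convexity follows.

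The main obstacle I anticipate is step (2): controlling $\widetilde{A}$ at infinity with enough precision. Unlike the compact case, the strong maximum principle requires knowing that $\inf_\Sigma \kappa_1/H$ is not lost ``at infinity'' with a negative value; this is where the precise decay estimates for asymptotically conical self-expanders enter, and where the hypothesis that the cone is \emph{weakly convex} (not merely mean convex) is essential — it is exactly what guarantees the limiting value of $\kappa_1/H$ along any sequence going to infinity is $\geq 0$. One would invoke the asymptotic expansion of $\Sigma$ near its cone (as in Bernstein--Wang's analysis of the space of asymptotically conical self-expanders, or Ding's construction) to get $C^2$-convergence of the rescaled hypersurfaces to the cone, hence convergence of normalized principal curvatures. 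A secondary technical point is justifying the tensor maximum principle on a complete noncompact manifold: one standard route is Omori--Yau type arguments combined with the drift structure, using that $e^{|x|^2/2}$-weighted geometry gives good cutoff functions; alternatively, since mean convex asymptotically conical self-expanders have been shown to form a compact family (Bernstein--Wang), one can localize and take limits. The cancellation in step (1) and the incompatibility argument in step (4) are comparatively routine given the self-expander equation and the cited splitting results.
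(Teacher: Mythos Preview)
Your architecture matches the paper's (normalize by $H$, control at infinity via the cone, strong maximum principle, then eliminate the degenerate case), but there is a genuine gap in how you pass from ``infimum not lost at infinity'' to the dichotomy in step~(3). The equation for $\widetilde A=A/H$ is $L\widetilde A=0$ with \emph{zero} reaction term, so there is nothing ``favorable'' to invoke in step~(1); the null-eigenvector condition is satisfied trivially and gives no obstruction to a negative interior minimum. Concretely: if $\epsilon_1:=\inf_\Sigma \kappa_1/H<0$ is attained at $p_0$, the strong tensor maximum principle applied to $\widetilde A-\epsilon_1 g\ge 0$ tells you only that the $\epsilon_1$-eigenspace of $\widetilde A$ is parallel near $p_0$. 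Your step~(4) splitting argument does not apply here: a parallel eigenspace with \emph{nonzero} eigenvalue does not produce a Euclidean factor (Hamilton's Lemma~9.1 uses $\kappa_1=0$ and the Gauss equation in flat ambient space). So you have not ruled out $\epsilon_1<0$, and the dichotomy ``$\kappa_1/H\equiv 0$ or $\kappa_1/H>0$'' in step~(3) silently assumes the weak convexity you are trying to prove.

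What the paper does instead at this juncture is a rigidity argument (its Lemma~3.4): from the equality case of the elliptic inequality one reads off $\nabla h_{ls}=0$ for all $s\le k<l$, and Codazzi then forces $\nabla_l\kappa_j=0$ across the block decomposition in both directions; combined with $\lambda_k/H\equiv\epsilon_1$ this yields $\nabla H\equiv 0$, whence the self-expander identity $\mathcal L H+(|A|^2+1)H=0$ gives $H\equiv 0$, contradicting mean convexity. This rigidity step, not a splitting, is what kills the negative-infimum case, and it is absent from your outline. A secondary point: to run the scalar strong maximum principle the paper must confront the non-smoothness of $\kappa_1/H$ when eigenvalue multiplicities jump; it resolves this by replacing $\kappa_1$ with $\lambda_k=\kappa_1+\cdots+\kappa_k$ (smooth on $\{\kappa_k<\kappa_{k+1}\}$, by a spectral-projection argument) and proving a key algebraic identity (Lemma~3.2) showing the correction terms have a sign. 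Your tensor route could in principle sidestep this smoothness issue, but then you owe a careful statement of an \emph{elliptic} tensor strong maximum principle on a complete noncompact hypersurface, and you still need the rigidity computation above.
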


\begin{rmk} \label{rmk:weakerassumption}
	We say that a cone is {\it mean convex and weakly convex} if its mean curvature is positive and all of its principal curvatures are nonnegative away from the vertex of the cone. In particular, a {\it strictly convex} cone, i.e., all its principal curvatures are strictly positive except for the one with respect to the radial direction, satisfies the condition.
\end{rmk}

\begin{rmk}
	In a forthcoming joint paper with H.-D. Cao \cite{Cao-Xie:25b}, we investigate positivity properties of curvature for asymptotically conical gradient expanding Ricci solitons under certain conditions.
\end{rmk}

From a dynamic point of view, the mean curvature flow associated with an asymptotically conical self-expander can be viewed as flowing out of the cone. In this light, Theorem \ref{thm:selfexpander} may be interpreted as a positive curvature preserving phenomenon for the mean curvature flow. On the other hand, without imposing convexity on the initial non-compact hypersurface, one generally may not expect the resulting self-expander to be convex. Thus, it seems that the convexity assumption on the asymptotic cone may be necessary, and it is natural to expect the existence of mean convex but non-weakly-convex self-expanders that are asymptotic to mean convex but non-weakly-convex cones. Moreover, from a static perspective, Theorem \ref{thm:selfexpander} indicates that the positivity of curvature at infinity controls the interior geometry of the asymptotically conical self-expander.

To prove Theorem \ref{thm:selfexpander}, we first establish weak convexity via a contradiction argument, adapting the approach of \cite{SX:20, Xie-Yu:23}, but with essential modifications to account for higher dimensions and the possible multiplicities of principal curvatures. We then apply Hamilton's strong maximum principle \cite{Ham:86} to deduce strict convexity. If the negative infimum of $\kappa_1/H$ occurs at infinity, the convexity of the asymptotic cone rules out this case. Thus, it remains to consider the case where $\kappa_1/H$ attains its negative infimum at an interior point. A technical difficulty arises because the second fundamental form of $\Sigma$ may not be differentiable in a local curvature frame when the multiplicities of the principal curvatures vary. Nevertheless, by a result of Derdzi\'nski \cite{De:80} or Singley \cite{Singley:75} (see Lemma \ref{lem:derzinski}), the principal curvatures and their associated eigendistributions are differentiable on an open dense subset of $\Sigma$.

However, unlike the 2-dimensional mean convex case \cite{SX:20} and the higher dimensional 2-convex case \cite{Xie-Yu:23}, the function $\kappa_1/H$ is not necessarily smooth on $\{\kappa_1<0\}$ in the general mean convex setting, and thus the strong maximum principle cannot be applied to $\kappa_1/H$. To overcome this, inspired by Kato \cite{Kato:80}, we make two observations. First, if $\kappa_1/H$ achieves a negative infimum at an interior point, then for some $1 \leq k < n$ with $\kappa_k < \kappa_{k+1}$, the function $\lambda_k/H$, where $\lambda_k = \kappa_1 + \cdots + \kappa_k$, must also attain a negative infimum at that point (see Lemma \ref{lem:infkappa&lambda}). Second, $\lambda_k/H$ is smooth on the region $\{\kappa_k < \kappa_{k+1}\}$ (see Lemma \ref{lem:smooth-lambda_k}), allowing the application of the strong maximum principle on this set.

In shifting our focus from $\kappa_1/H$ to $\lambda_k/H$, we introduce a new technical difficulty: the derivation of a suitable elliptic differential inequality for $\lambda_k/H$. This step is subtle due to the high dimensional setting and the possible multiplicities of the principal curvatures. Nevertheless, by observing a key algebraic identity given in Lemma \ref{lem:key}, which holds for general hypersurfaces, we establish the desired elliptic differential inequality stated in Lemma \ref{lem:L-leq-0}. This inequality is then applied in Lemma \ref{lem:noiinteriorinf} to rule out negative interior minima of $\lambda_k/H$ on the region $\{\kappa_k < \kappa_{k+1}\}$ for $1 \leq k < n$. We remark that our method applies to all types of self-similar solutions to the mean curvature flow.

The paper is organized as follows. In Section \ref{sec:2}, we introduce the notation used throughout the paper and collect several results needed for the proof of the main theorem, with particular attention to the properties of the Codazzi tensor, which enable local computations involving principal curvatures with possible multiplicities. In Section \ref{sec:3}, based on a key algebraic identity (Lemma \ref{lem:key}) that may be of independent interest, we establish an elliptic differential inequality for self-expanders and apply it to rule out the occurrence of negative interior minima for certain functions. Finally, in Section \ref{sec:4}, we complete the proof of Theorem \ref{thm:selfexpander}.

\medskip
\textbf{Acknowledgements.} Part of this work was carried out while the author was a Ph.D. student at Lehigh University. He would like to thank Prof. Huai-Dong Cao for constant support, encouragement, and valuable suggestions. The author would also like to thank Prof. Xiaochun Rong for constant support and encouragement, as well as Guanhua Shao and Prof. Ling Xiao for helpful discussions.

\medskip
\section{Preliminaries} \label{sec:2}
In this section, we fix the notation for the rest of the paper and collect several results that will be used later in the proof of the main theorem. In particular, in contrast to \cite{Xie-Yu:23}, where the authors consider only the case in which the principal curvatures have multiplicity one, we shall address local differential computations involving principal curvatures with possible multiplicities.

\subsection{Codazzi tensors} \label{sec:2.1}
A symmetric $(0,2)$ tensor $T$ on a Riemannian manifold is said to be a {\em Codazzi tensor} if it satisfies the {\em Codazzi equation}
$$ (\na_XT)(Y,Z) = (\na_YT)(X,Z) $$
for any tangent vector fields $X, Y$, and $Z$. In particular, the second fundamental form $A=\{h_{ij}\}$ on a hypersurface $\Sigma^n \subset \R^{n+1}$ is a Codazzi tensor. Define the integer-valued function $E_A$ by
\begin{equation*}
	E_A(p) = \text{the number of distinct eigenvalues of }A(p).
\end{equation*}
Let 
$$\Sigma_A = \{p\in\Sigma: E_A \text{ is constant in a neighborhood of }p\},$$ 
then $\Sigma_A$ is open dense in $\Sigma$ (see \cite{Singley:75,De:80}). We now recall a very useful result for local computations involving the principal curvatures.

\begin{lem}  \textup{(\cite{Singley:75,De:80})} 	\label{lem:derzinski}
	Given any Codazzi tensor $A$ on a Riemannian manifold $\Sigma^n$, on each connected component $\Omega$ of $\Sigma_A$, we have:
	\begin{enumerate}
		\item The eigendistributions of $A$ are integrable and their leaves are
		totally umbilic submanifolds of $\Sigma$.
		\item Eigenspaces of $A$ form mutually orthogonal differentiable distributions.
	\end{enumerate}
\end{lem}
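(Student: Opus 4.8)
The plan is to reproduce the arguments of Singley \cite{Singley:75} and Derdzi\'nski \cite{De:80}. Fix a connected component $\Omega$ of $\Sigma_A$. By the definition of $\Sigma_A$, the integer $E_A$ is constant on $\Omega$, say $E_A \equiv m$; hence the eigenvalues of $A$ organize into continuous functions $\lambda_1 < \lambda_2 < \cdots < \lambda_m$ on $\Omega$, with multiplicities $n_1,\ldots,n_m$ that are likewise constant on $\Omega$. Write $D_i \subset T\Omega$ for the $\lambda_i$-eigendistribution and $D_i^{\perp} = \bigoplus_{j\neq i} D_j$; mutual orthogonality of the $D_i$ is immediate from self-adjointness of $A$ (if $Av = \lambda_i v$ and $Aw = \lambda_j w$ with $i\neq j$, then $\lambda_i\la v,w\ra = \la Av,w\ra = \la v, Aw\ra = \lambda_j\la v,w\ra$, so $\la v,w\ra = 0$).

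I would first prove statement (2), i.e.\ that each $D_i$ is a differentiable distribution. Near any $p\in\Omega$, choose pairwise disjoint small circles $\gamma_i\subset\C$ with $\lambda_i(p)$ inside $\gamma_i$ and all other eigenvalues outside; by continuity of the spectrum, for $q$ near $p$ the circle $\gamma_i$ still encloses exactly the eigenvalue $\lambda_i(q)$, with total multiplicity $n_i$. The Riesz projection
$$ P_i(q) = \frac{1}{2\pi\sqrt{-1}}\oint_{\gamma_i}\big(z\,\mathrm{Id} - A(q)\big)^{-1}\,dz $$
then depends smoothly on $q$; since $A$ is self-adjoint, generalized eigenspaces coincide with eigenspaces, so $P_i(q)$ is the orthogonal projection onto $D_i(q)$. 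Consequently $D_i = \operatorname{Im} P_i$ is a smooth rank-$n_i$ distribution and $\lambda_i = \tfrac{1}{n_i}\tr(A P_i)$ is smooth on $\Omega$. In particular this furnishes, near each point, smooth local frames adapted to the splitting $T\Omega = \bigoplus_i D_i$, which the next step uses.

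For statement (1) I would exploit that the Codazzi equation, together with the symmetry of $A$, makes $(\na_Z A)(X,Y)$ totally symmetric in $Z,X,Y$. Viewing $A$ as a self-adjoint endomorphism field and differentiating $AX = \lambda_i X$ for $X\in\Gamma(D_i)$ gives the basic identity
$$ (\na_Z A)(X) = (Z\lambda_i)\,X + (\lambda_i\,\mathrm{Id} - A)(\na_Z X). $$
Taking $X,Y\in\Gamma(D_i)$ and subtracting this identity from its $X\leftrightarrow Y$ version, the Codazzi symmetry yields
$$ (X\lambda_i)Y - (Y\lambda_i)X = (A - \lambda_i\,\mathrm{Id})[X,Y]. $$
The left-hand side lies in $D_i$ while the right-hand side lies in $D_i^{\perp}$ (because $A - \lambda_i\,\mathrm{Id}$ annihilates $D_i$ and is invertible on $D_i^{\perp}$), so both sides vanish: vanishing of the right-hand side gives $[X,Y]\in\Gamma(D_i)$, hence $D_i$ is integrable by Frobenius, and vanishing of the left-hand side gives that $\lambda_i$ is constant along the leaves whenever $n_i\geq 2$ (there being nothing to prove if $n_i = 1$). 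Finally, taking $X,Y\in\Gamma(D_i)$ and $W\in\Gamma(D_j)$ with $j\neq i$, the basic identity together with the total symmetry of $\na A$ gives
$$ (\lambda_i - \lambda_j)\,\la\na_X Y, W\ra = (W\lambda_i)\,\la X,Y\ra, $$
so the second fundamental form of a leaf $L$ of $D_i$ satisfies $\mathrm{II}(X,Y) = \la X,Y\ra\,\eta$ with $\eta = \sum_{j\neq i}(\lambda_i-\lambda_j)^{-1}\proj_{D_j}(\na\lambda_i)$ independent of $X$ and $Y$; that is, $L$ is totally umbilic, proving (1).

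The step needing the most care — and the reason the statement is localized to $\Sigma_A$ rather than all of $\Sigma$ — is the differentiability assertion in (2): eigenprojections of a smooth self-adjoint tensor field are in general only continuous across loci where eigenvalue multiplicities jump, and it is precisely the local constancy of $E_A$ on $\Omega$ that lets the contour-integral (Riesz) argument produce smooth $P_i$. The remainder is a careful but routine manipulation of the Codazzi identity in the $(1,1)$-picture, the one point to keep track of being the consistent use of the full symmetry of $\na A$.
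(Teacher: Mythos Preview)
Your proof is correct and faithfully reproduces the classical arguments of Singley and Derdzi\'nski. Note, however, that the paper does not actually prove Lemma~\ref{lem:derzinski}; it simply cites it from \cite{Singley:75,De:80} and moves on. So there is no ``paper's own proof'' to compare against beyond the original references you are already following. It is worth observing that the Riesz projection (contour-integral) argument you give for part~(2) is exactly the technique the paper later deploys in Lemma~\ref{lem:smooth-lambda_k} to show smoothness of $\lambda_k$ on $\Sigma^k$, so your approach is fully compatible with the paper's toolkit.
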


For each connected component $\Omega$ of $\Sigma_A$, let $E_1, \cdots, E_m$ denote the local eigendistributions of the second fundamental form $A$ around any point $p\in \Omega$. Let $d_l$ be the dimension of $E_l$ for $l=1, \cdots, m$, so that $\sum_{l=1}^{m}d_l = n$. Then, there exists a local orthonormal frame of eigenvector fields $\{\tau_i\}_{i=1}^n$ around $p$ such that 
$$\{\tau_1,\cdots, \tau_{r_1}\} \in E_1, \quad \{\tau_{r_1 + 1},\cdots, \tau_{r_2}\}\in E_2,\quad  \cdots \quad \{\tau_{r_{m-1}+1},\cdots,\tau_n\} \in E_m,$$
where the {\it partial sums} are defined as
$$r_1:=d_1,\quad r_2:=d_1+d_2,\quad \cdots \quad r_{m-1}:=d_1+\cdots +d_{m-1},\quad r_m:=d_1+\cdots +d_m=n.$$
Define the {\it principal curvatures} by
$$ \kappa_i := A(\tau_i,\tau_i)=h_{ii}.$$
By Lemma \ref{lem:derzinski} (\cite{De:80,Singley:75}), the principal curvatures $\kappa_i$ are continuous on $\Sigma$ and smooth on the open dense subset $\Sigma_A\subset \Sigma$. Moreover, around any point $p\in \Sigma_A$, for $1\leq i,j \leq n$, we have
\begin{equation} \label{eq:h_ij&delta_ij}
	A(\tau_i,\tau_j)=h_{ij}=\kappa_i\delta_{ij},
\end{equation}
where $\delta_{ij}$ denotes the Kronecker delta.

With appropriate labeling, we may assume
\begin{equation*}
	\kappa_1 \leq \kappa_2 \leq \cdots \leq \kappa_n,
\end{equation*}
and that
\begin{equation*}
	\kappa_1 =\cdots =  \kappa_{r_1}, \quad \kappa_{r_1+1}=\cdots =\kappa_{r_2},\quad \cdots \quad \kappa_{r_{m-1}+1} = \cdots = \kappa_n.
\end{equation*}
Set
\begin{equation*}
	\kappa^{\prime}_1:=\kappa_1,\quad \kappa^{\prime}_2:=\kappa_{r_1+1}=\kappa_{d_1+1},\quad \cdots \quad \kappa^{\prime}_m:=\kappa_{r_{m-1}+1}=\kappa_{d_1+d_2+\cdots +d_{m-1}+1}.
\end{equation*}
Then $\kappa^{\prime}_1, \cdots, \kappa^{\prime}_m$ are distinct, each with multiplicity $d_1, \cdots, d_m$, respectively.

From now on, we refer to the above special local (tangent) frame $\{\tau_i\}_{i=1}^{n}$ as the {\it adapted moving frame} around the point $p$. For simplicity, we adopt the convention that $\na_i$ denotes the covariant derivative in the direction of $\tau_i$, i.e., $\na_i :=\na_{\tau_i}$. Then, the covariant derivatives of the frame vectors can be expressed as
\begin{equation*}
	\na_i \tau_j = \sum_{k=1}^{n}c_{ij}^k \tau_k,
\end{equation*}
for some smooth functions $c_{ij}^k$ defined in a neighborhood of $p\in \Sigma_A$. By Lemma \ref{lem:derzinski}, we are able to carry out local computations of the differential equations for the principal curvatures, possibly with multiplicities, as stated below.

\begin{lem}	\label{lem:localcompute}
	Let $\Sigma^n \subset \R^{n+1}$ be an immersed two-sided hypersurface. On each connected component $\Omega$ of $\Sigma_A$, let $\{\tau_i\}_{i=1}^{n}$ be the adapted moving frame around a point $p$ in $\Omega$. Then, at the point $p\in \Omega$, we have
	\begin{gather}
		{\nabla{\kappa}_i}=\na h_{ii},\quad \text{for}\ 1\leq i \leq n, \label{eq:local_1} \\
		\na h_{ij}=0,\quad \text{if }  \kappa_i=\kappa_j \text{ and } i\neq j, \label{eq:local_2} \\
		c_{ki}^j = \frac{\na_kh_{ij}}{\kappa_i -\kappa_j},\quad \text{if } \kappa_i\neq \kappa_j \text{ and } 1\leq k \leq n, \label{eq:local_3}\\
		{\Delta{\kappa}_i} =\Delta h_{ii}+2\sum_{\kappa_l \neq \kappa_i} \frac{|\na h_{il}|^2}{\kappa_i-\kappa_l},\quad \text{for } 1\leq i \leq n. \label{eq:local_4}
	\end{gather}
\end{lem}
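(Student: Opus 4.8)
The plan is to work entirely at the fixed point $p\in\Omega$ using the adapted moving frame $\{\tau_i\}_{i=1}^n$ provided by Lemma \ref{lem:derzinski}, exploiting the fact that in this frame the second fundamental form is diagonal at $p$ by \eqref{eq:h_ij&delta_ij}, while the frame itself and the connection coefficients $c_{ij}^k$ are smooth near $p$ by part (2) of Lemma \ref{lem:derzinski}. The key point is that although $\kappa_i$ need not be smooth globally, on $\Omega$ the number of distinct eigenvalues is locally constant, so each $\kappa_i$ agrees locally with a smooth function (namely $h_{ii}$ in the adapted frame) and one can legitimately differentiate.

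First I would establish \eqref{eq:local_1}: since $\kappa_i = A(\tau_i,\tau_i) = h_{ii}$ as functions on a neighborhood of $p$, taking the covariant derivative of the scalar $\kappa_i$ gives $\nabla\kappa_i = \nabla(h_{ii})$; here one must be careful that $\nabla h_{ii}$ on the left denotes the derivative of the scalar function $p\mapsto A(\tau_i,\tau_i)$, whereas I will also use $\nabla h_{ij}$ for the components of the covariant derivative tensor $\nabla A$ — the relation between them is governed by the connection terms $c_{ij}^k$, and it is this bookkeeping that produces the correction terms in \eqref{eq:local_4}. Next, for \eqref{eq:local_2}, I would differentiate the identity $A(\tau_i,\tau_j) = \kappa_i\delta_{ij}$ (valid near $p$): expanding $\nabla_k\bigl(A(\tau_i,\tau_j)\bigr) = (\nabla_k A)(\tau_i,\tau_j) + A(\nabla_k\tau_i,\tau_j) + A(\tau_i,\nabla_k\tau_j)$ and evaluating at $p$ where $A$ is diagonal, one finds $(\nabla_k A)(\tau_i,\tau_j) = \nabla_k(h_{ij}) - (\kappa_i-\kappa_j)c_{ki}^j$ for $i\neq j$; when $\kappa_i=\kappa_j$ the scalar $h_{ij}=A(\tau_i,\tau_j)$ vanishes identically near $p$ (both indices lie in the same eigendistribution, whose restriction of $A$ is $\kappa_i$ times the identity), so $\nabla_k(h_{ij})=0$ and hence $(\nabla_k A)(\tau_i,\tau_j)=0$, i.e. $\nabla h_{ij}=0$. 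The same displayed expansion, now in the case $\kappa_i\neq\kappa_j$, gives $c_{ki}^j = \nabla_k h_{ij}/(\kappa_i-\kappa_j)$ once one observes $\nabla_k(h_{ij})=0$ there as well (the scalar $A(\tau_i,\tau_j)$ is identically zero whenever $\tau_i,\tau_j$ lie in distinct eigendistributions near $p$), which is \eqref{eq:local_3}; and one should double-check the antisymmetry $c_{ki}^j=-c_{kj}^i$ coming from $\nabla_k$ preserving orthonormality, so the formula is consistent.

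Finally, for the Laplacian formula \eqref{eq:local_4}, I would compute $\Delta\kappa_i = \sum_k \nabla_k\nabla_k(h_{ii})$ by differentiating \eqref{eq:local_1} once more. Writing $\nabla_k(h_{ii}) = (\nabla_k A)(\tau_i,\tau_i)$ (the connection terms drop at first order since $A$ is diagonal) and then applying $\nabla_k$ again, the product rule produces $(\nabla_k\nabla_k A)(\tau_i,\tau_i) + 2(\nabla_k A)(\nabla_k\tau_i,\tau_i)$; expanding $\nabla_k\tau_i = \sum_l c_{ki}^l\tau_l$ and keeping only $l$ with $\kappa_l\neq\kappa_i$ (the terms with $\kappa_l=\kappa_i$ contribute $(\nabla_k A)(\tau_l,\tau_i)=\nabla_k h_{li}=0$ by \eqref{eq:local_2}), substituting $c_{ki}^l$ from \eqref{eq:local_3} and summing over $k$ yields $\Delta h_{ii} + 2\sum_{\kappa_l\neq\kappa_i}\sum_k (\nabla_k h_{il})^2/(\kappa_i-\kappa_l) = \Delta h_{ii} + 2\sum_{\kappa_l\neq\kappa_i}|\nabla h_{il}|^2/(\kappa_i-\kappa_l)$, using the Codazzi symmetry of $\nabla A$ to identify the relevant components. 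The main obstacle, and the step requiring the most care, is the precise separation of ``scalar derivative of $h_{ii}$'' from ``tensor component $(\nabla A)_{iij}$'' and tracking which connection terms $c_{ki}^j$ survive at $p$: the diagonality of $A$ at $p$ kills some, \eqref{eq:local_2} kills the $\kappa_l=\kappa_i$ ones, and only the cross terms with distinct eigenvalues remain, which is exactly what generates the sum in \eqref{eq:local_4}; everything else is a bounded, routine computation in the smooth frame guaranteed by Lemma \ref{lem:derzinski}.
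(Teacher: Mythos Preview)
Your proposal is correct and follows essentially the same route as the paper: derive the antisymmetry $c_{ki}^j=-c_{kj}^i$ from orthonormality, differentiate the scalar $A(\tau_i,\tau_j)$ via the product rule to obtain \eqref{eq:local_1}--\eqref{eq:local_3}, and then differentiate once more for \eqref{eq:local_4}, killing the $\kappa_l=\kappa_i$ cross terms with \eqref{eq:local_2} and substituting \eqref{eq:local_3} for the rest. The only minor discrepancies are that the paper introduces radially parallel vectors $v_k=\tau_k(p)$ to streamline the Laplacian step, your displayed formula for $(\nabla_kA)(\tau_i,\tau_j)$ has a sign slip (it should be $+(\kappa_i-\kappa_j)c_{ki}^j$, consistent with the stated \eqref{eq:local_3}), and the Codazzi equation is not actually used here---$\sum_k(\nabla_k h_{il})^2=|\nabla h_{il}|^2$ is just the definition.
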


\begin{proof}
	First of all, since $\la \tau_i, \tau_j \ra =\delta_{ij}$ for $1\leq i,j \leq n$, we have, for any $k$, 
	\begin{equation*}
		0= \na_k \la \tau_i, \tau_j \ra = \la \na_k\tau_i, \tau_j \ra + \la \tau_i, \na_k\tau_j \ra =  \sum_{l=1}^{n} \lp \la c_{ki}^l\tau_l, \tau_j \ra + \la \tau_i, c_{kj}^l\tau_ l \ra \rp,
	\end{equation*}
	which implies the antisymmetry identity:
	\begin{equation} \label{eq:local_antisym}
		c_{ki}^j=-c_{kj}^i.
	\end{equation}
	In particular, this implies
	\begin{equation} \label{eq:c_{ji}^{i}}
		c_{ji}^i \equiv 0,
	\end{equation}
	for $1\leq i,j \leq n$. Thus, for $1\leq j \leq n$, by (\ref{eq:h_ij&delta_ij}) and (\ref{eq:c_{ji}^{i}}), we have
	\begin{equation*}
		\begin{split}
			\na_j \kappa_i &= \na_j (A(\tau_i,\tau_i)) \\
			&= (\na_j A)(\tau_i, \tau_i) + 2A(\na_j \tau_i, \tau_i) \\
			&= (\na_j A)(\tau_i, \tau_i) + 2A\lp \sum_{k=1}^{n}c_{ji}^k \tau_k, \tau_i \rp \\
			&=\na_jh_{ii}.
		\end{split}
	\end{equation*}
	
	Next, for any $k$ and $i\neq j$, by (\ref{eq:h_ij&delta_ij}), we have
	\begin{equation} \label{eq:local_h_{kij}}
		\begin{split}
			0 &= \na_k (A(\tau_i,\tau_j)) \\
			&= (\na_k A)(\tau_i, \tau_j) + A(\na_k \tau_i, \tau_j) + A(\tau_i, \na_k\tau_j) \\
			&= (\na_k A)(\tau_i, \tau_j) + \sum_{l=1}^{n} \lp A(c_{ki}^l \tau_l, \tau_j)+ A(\tau_i, c_{kj}^l\tau_l)\rp \\
			&=\na_kh_{ij} + c_{ki}^j\kappa_j + c_{kj}^i\kappa_i.
		\end{split}
	\end{equation}
	Combining (\ref{eq:local_antisym}) and (\ref{eq:local_h_{kij}}), for any $k$, we have
	\begin{equation} \label{eq:local_ki=kj}
		\na_k h_{ij} =0,\quad \text{if}\ \kappa_i=\kappa_j \text{ and } i\neq j,
	\end{equation}
	and
	\begin{equation} \label{eq:local_ki-neq-kj}
		c_{ki}^j = \frac{\na_kh_{ij}}{\kappa_i -\kappa_j},\quad \text{if}\ \kappa_i\neq \kappa_j.
	\end{equation}
	
	Finally, let $v_k=\tau_k(p)$ and extend $v_k$ by parallel translation along radial geodesics emanating from $p$. Then, by (\ref{eq:local_antisym}) and (\ref{eq:c_{ji}^{i}}), at $p$, we have
	\begin{equation} \label{eq:local_lap-kappa}
		\begin{split}
			\Delta \kappa_i &= \sum_{k=1}^{n}\na_{v_k} \na_{v_k} (A(\tau_i,\tau_i)) \\
			&= \sum_{k=1}^{n}(\na_{v_k} \na_{v_k}  A)(\tau_i, \tau_i) + 2\sum_{k=1}^{n}(\na_{v_k} A)(\na_{v_k} \tau_i, \tau_i) \\
			&= (\Delta A)(\tau_i, \tau_i) + 2\sum_{k=1}^{n}\sum_{l=1}^{n}(\na_k A)(c_{ki}^l\tau_l, \tau_i) \\
			&= \Delta h_{ii} + 2\sum_{k=1}^{n}\sum_{l=1}^{n}c_{ki}^l\na_kh_{li}.
		\end{split}
	\end{equation}
	On the other hand, by (\ref{eq:c_{ji}^{i}}), (\ref{eq:local_ki=kj}) and (\ref{eq:local_ki-neq-kj}), for fixed indices $i$ and $k$, we have
	\begin{equation} \label{eq:local_c-kil}
		\begin{split}
			\sum_{l=1}^{n}c_{ki}^l\na_kh_{li} &= \sum_{\kappa_l=\kappa_i}c_{ki}^l\na_kh_{li} + \sum_{\kappa_l\neq \kappa_i}c_{ki}^l\na_kh_{li}\\
			&= \sum_{\kappa_l\neq \kappa_i}c_{ki}^l\na_kh_{li}\\
			&= \sum_{\kappa_l\neq \kappa_i}\frac{|\na_k h_{il}|^2}{\kappa_i-\kappa_l}.
		\end{split}
	\end{equation}
	Combining (\ref{eq:local_lap-kappa}) and (\ref{eq:local_c-kil}) yields (\ref{eq:local_4}) and completes the proof of Lemma \ref{lem:localcompute}.
\end{proof}

Finally, from now on, we fix an index $k$ with $1\leq k<n$, and define the sum of the first $k$ principal curvatures by
\begin{equation*}
	\lambda_k := \sum_{s=1}^{k} \kappa_s.
\end{equation*}
We then set
$$ \Sigma^{k} :=\{p\in \Sigma: \kappa_k(p) <\kappa_{k+1}(p)\}, $$
and define
$$ \Sigma^{k}_A := \Sigma^{k}  \cap \Sigma_A. $$
It is clear that $ \Sigma^{k}$ is open and $ \Sigma^{k}_A$ is an open dense subset of $ \Sigma^{k}$. By \cite{De:80,Singley:75}, the function $\lambda_k$ is continuous on $\Sigma^k$. In fact, inspired by Kato \cite{Kato:80}, we further observe that $\lambda_k$ is smooth on $ \Sigma^{k}$.

\begin{lem} \label{lem:smooth-lambda_k}
	Let $\Sigma^n \subset \R^{n+1}$ be an immersed two-sided hypersurface. Then, for a fixed index $k$ with $1\leq k<n$, the function $\lambda_k$ is smooth on $ \Sigma^{k}$.
\end{lem}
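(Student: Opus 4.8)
The plan is to realize $\lambda_k$ as (up to sign) the sum of the $k$ lowest eigenvalues of the second fundamental form and to express this sum via a contour integral of the resolvent, following Kato's perturbation-theoretic viewpoint. Concretely, fix $p \in \Sigma^k$. Since $\kappa_k(p) < \kappa_{k+1}(p)$, we may pick a real number $c$ with $\kappa_k(p) < c < \kappa_{k+1}(p)$, and by continuity of the principal curvatures (Lemma \ref{lem:derzinski}) there is a neighborhood $U$ of $p$ on which $\kappa_k < c < \kappa_{k+1}$ still holds. Choose a simple closed curve (say a circle) $\gamma$ in $\C$ enclosing the interval $[\min_{\obar U}\kappa_1, c]$ but excluding $[c, \max_{\obar U}\kappa_n]$; shrinking $U$ if necessary, $\gamma$ encircles exactly the eigenvalues $\kappa_1(q), \dots, \kappa_k(q)$ of $A(q)$ for every $q \in U$, counted with multiplicity, and no others.

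The key step is then the Riesz projection formula. At each $q \in U$, let
$$
P(q) = \frac{1}{2\pi i} \oint_{\gamma} \big( z\, \mathrm{Id} - A(q) \big)^{-1}\, dz,
$$
which is the spectral projection onto the sum of eigenspaces of $A(q)$ for eigenvalues inside $\gamma$, i.e.\ onto the span of the first $k$ principal directions. Since $A$ is a smooth (indeed real-analytic in the entries) section of symmetric endomorphisms of $T\Sigma$ and $\gamma$ avoids $\mathrm{spec}(A(q))$ for all $q \in U$, the integrand $(z\,\mathrm{Id} - A(q))^{-1}$ is smooth jointly in $(z,q)$, so $P$ is a smooth endomorphism-valued function on $U$. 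Consequently
$$
\lambda_k = \tr\big( A\, P \big) = \frac{1}{2\pi i} \oint_{\gamma} z\, \tr\!\big( (z\,\mathrm{Id} - A)^{-1} \big)\, dz
$$
is smooth on $U$, since it is a trace of a product of smooth endomorphism fields (equivalently, a contour integral, depending smoothly on parameters, of $z$ times a rational function of the matrix entries of $A$ with nonvanishing denominator on $\gamma$). As $p \in \Sigma^k$ was arbitrary, $\lambda_k$ is smooth on all of $\Sigma^k$.

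The main obstacle — really the only subtle point — is to justify cleanly that the contour $\gamma$ can be chosen \emph{uniformly} on a neighborhood separating the first $k$ eigenvalues from the rest, and that the resolvent and hence $P$ are genuinely $C^\infty$ in the base variable, not merely continuous; this is where the hypothesis $\kappa_k < \kappa_{k+1}$ is essential (it is exactly what makes the relevant part of the spectrum a "separated" group in Kato's sense, so that $P$ is well-defined and smooth even across loci where the individual $\kappa_i$ inside $\gamma$ collide and fail to be smooth). One should note that this argument does not need Lemma \ref{lem:derzinski}'s structural conclusions at all — only smoothness of $A$ itself and the spectral gap — which is why it works globally on $\Sigma^k$ rather than merely on $\Sigma^k_A$. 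Everything else is the standard holomorphic functional calculus: $P^2 = P$, $\tr P = k$, and $AP = PA$, together with differentiation under the integral sign, which is routine given the joint smoothness of the integrand.
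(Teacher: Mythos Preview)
Your proof is correct and follows essentially the same approach as the paper: both use the spectral gap $\kappa_k < \kappa_{k+1}$ to choose a contour $\gamma$ separating the first $k$ eigenvalues from the rest on a neighborhood, form the Riesz spectral projection $P = \frac{1}{2\pi i}\oint_\gamma (zI-A)^{-1}\,dz$, observe it is smooth because the resolvent is, and recover $\lambda_k$ as a trace (the paper writes $\tr(PAP)$, you write $\tr(AP)$, which agree since $P^2=P$ and $P$ commutes with $A$). The only cosmetic difference is that you also record the alternative formula $\lambda_k = \frac{1}{2\pi i}\oint_\gamma z\,\tr\big((zI-A)^{-1}\big)\,dz$, which the paper does not mention.
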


\begin{proof}
	First of all, for any point $p_0\in \Sigma^{k}$, since $\kappa_k(p_0)<\kappa_{k+1}(p_0)$, we can find a simple closed, positively oriented curve $\gamma$ in the complex plane $\C$ such that the first $k$ principal curvatures
	\begin{equation*}
		\kappa_1(p_0)\leq \kappa_2(p_0) \leq \cdots \leq \kappa_k(p_0)
	\end{equation*}
	 lie inside $\gamma$, while the remaining principal curvatures $\kappa_l(p_0)$, for $k<l\leq n$, lie outside $\gamma$. Moreover, since the set $ \Sigma^{k}$ is open and all principal curvatures are continuous, we can find a small open neighborhood $U\subset \Sigma^{k}$ containing $p_0$ such that, for all $p\in U$, the circle $\gamma$ encloses exactly the first $k$ principal curvatures $\kappa_1(p), \cdots, \kappa_k(p)$, and excludes the others.
	
	We now define the spectral projection (see, e.g., \cite[II.(1.16), Page 67]{Kato:80})
	\begin{equation*}
		P(p) := \frac{1}{2\pi \sqrt{-1}} \int_\gamma (zI - A(p))^{-1} \, dz,
	\end{equation*}
	where $(zI - A(p))^{-1}$ is the resolvent of $A(p)$ at $z\in \C\backslash \sigma(A(p)) $. Here $\sigma(A)$ denotes the spectrum of $A$. Since, for each $z\in \gamma$, $(zI - A(p))^{-1}$ is smooth with respect to point $p\in U$, it follows that $P(p)$ is smooth on $U$. Moreover, $P$ is symmetric and idempotent satisfying $P^2=P$ (see, e.g., \cite[I.(5.21), Page 39]{Kato:80}), so it defines a smooth orthogonal projection onto the invariant subspace corresponding to the eigenvalues (i.e., principal curvatures) of $A(p)$ enclosed by $\gamma$ (see, e.g., \cite[Problem I.5.9, Page 40]{Kato:80}).
	
	Finally, we define the compressed operator as
	\begin{equation*}
		A_k(p) = P(p)A(p)P(p),
	\end{equation*}
	which is symmetric and smooth on $U$. By construction, $A_k(p)$ has the same eigenvalues as the first $k$ principal curvatures of $A(p)$. Therefore, its trace satisfies
	\begin{equation*}
		\tr(A_k(p)) = \sum_{s=1}^{k} \kappa_s(p) = \lambda_k(p).
	\end{equation*}
	Since the trace of $A_{k}(p)$ is smooth, it follows that $\lambda_k$ is smooth on $U$, and hence smooth on $\Sigma^k$. 
	
	This completes the proof of Lemma \ref{lem:smooth-lambda_k}.
\end{proof}

\subsection{Self-expanders} \label{sec:2.2}
First of all, we recall the following well-known basic properties of self-expanders to the mean curvature flow (see, e.g., \cite{Ding:20} or \cite[Lemma 2.4.1]{Xie:23}).
\begin{lem} \label{lem:preexpander}
	Let $x: \Sigma^n \rightarrow \R^{n+1}$ be a complete immersed two-sided self-expanders satisfying Eq. (\ref{eq:selfexpander}). Let $\mL:=\Delta^{\Sigma}+\la \na \cdot,x \ra$ be the drift Laplacian on $\Sigma$. Then, we have
	\begin{enumerate}
		\item $ \mL H + H(|A|^2+1)=0 $,
		\item $ \mL A + A(|A|^2+1) = 0 $.
	\end{enumerate}
\end{lem}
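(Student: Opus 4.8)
\textbf{Proof proposal for Lemma \ref{lem:preexpander}.}

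The plan is to derive both evolution-type equations directly from the self-expander equation \eqref{eq:selfexpander}, $H = \la x, \nu\ra$, by taking successive covariant derivatives and invoking the Codazzi and Gauss equations together with the Simons-type identity for the second fundamental form. I would work in a local orthonormal frame $\{\tau_i\}$ on $\Sigma$ that is geodesic at the point under consideration, so that covariant derivatives of the frame vanish there, and use the standard structure equations $\na_i x = \tau_i$ (the tangential derivative of the position vector is the frame vector), $\na_i \nu = h_{ij}\tau_j$ (Weingarten), and $\na_i \tau_j = h_{ij}\nu$ (Gauss formula for the immersion). These give the two basic identities $\na_i H = \na_i \la x,\nu\ra = \la \tau_i, \nu\ra + \la x, h_{ij}\tau_j\ra = h_{ij}\la x, \tau_j\ra$, since $\la \tau_i,\nu\ra = 0$, and, differentiating the scalar $\la x, \tau_j\ra$, one gets $\na_i \la x, \tau_j\ra = \delta_{ij} + H h_{ij}$ using $\la x,\nu\ra = H$. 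I would record $x^T := \sum_j \la x,\tau_j\ra \tau_j$ for the tangential part of the position vector, so that the drift term $\la \na f, x\ra = \la \na f, x^T\ra$ for any function $f$.

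Next I would compute $\Delta^\Sigma H$. Starting from $\na_i H = h_{ij}\la x,\tau_j\ra$ and differentiating again at a geodesic point, $\na_k\na_i H = (\na_k h_{ij})\la x,\tau_j\ra + h_{ij}(\delta_{jk} + Hh_{jk})$. Tracing over $i = k$ and using the Codazzi equation $\na_k h_{ij} = \na_i h_{kj}$ together with the traced Codazzi identity $\sum_k \na_k h_{ki} = \na_i H$, the first term becomes $\la \na H, x^T\ra = \la \na H, x\ra$, and the remaining terms give $|A|^2\nu\text{-free: } |A|^2 + H\,\tr(A^2)$ — more precisely $\sum_{i,j} h_{ij}\delta_{ij} = H$ and $H\sum_{i,j} h_{ij}h_{ij} = H|A|^2$. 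Hence $\Delta^\Sigma H = \la \na H, x\ra + H + H|A|^2$, which rearranges to $\mL H + H(|A|^2 + 1) = 0$, giving part (1). For part (2), I would carry out the analogous but more involved computation for the full tensor $A = \{h_{ij}\}$: differentiate $\na_k h_{ij}$ using Codazzi to swap derivatives onto $h$, commute covariant derivatives via the Gauss equation $R_{ijkl} = h_{ik}h_{jl} - h_{il}h_{jk}$ (so that the Ricci-type correction terms appearing in $\Delta^\Sigma h_{ij}$ are quadratic in $A$), and then substitute the relations above for $\na_k\la x,\tau_l\ra$. The Simons identity yields $\Delta^\Sigma h_{ij} = \na_i\na_j H + H h_{il}h_{lj} - |A|^2 h_{ij}$ for hypersurfaces, and one then computes $\na_i\na_j H$ from $\na_i H = h_{il}\la x,\tau_l\ra$: differentiating gives $\na_j\na_i H = (\na_j h_{il})\la x,\tau_l\ra + h_{il}(\delta_{lj} + Hh_{lj}) = (\na_l h_{ij})\la x,\tau_l\ra + h_{ij} + H h_{il}h_{lj}$, where Codazzi was used again. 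The first term is $\la \na h_{ij}, x\ra$, so assembling everything the quadratic-in-$A$ terms $H h_{il}h_{lj}$ cancel between the Simons expression and $\na_i\na_j H$, and one is left with $\Delta^\Sigma h_{ij} = \la \na h_{ij}, x\ra + h_{ij} - |A|^2 h_{ij}$, i.e. $\mL A + A(|A|^2 + 1) = 0$, which is part (2).

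The main obstacle is the bookkeeping in part (2): correctly tracking the curvature commutator terms when computing $\Delta^\Sigma h_{ij}$ (i.e. getting the Simons identity right in the hypersurface-in-$\R^{n+1}$ setting, where the ambient curvature vanishes so only the Gauss-equation terms survive) and then verifying that the quadratic terms $Hh_{il}h_{lj}$ cancel precisely, leaving only the linear term $h_{ij}$ and the term $-|A|^2 h_{ij}$. Part (1) is comparatively routine. Since this lemma is standard — it is exactly the self-expander analogue of Huisken's evolution equations for the mean curvature flow, obtained formally by the substitution that replaces the flow's time derivative with the drift term $\la \na\cdot, x\ra$ and adds the $+1$ coming from the homothety factor — I would not belabor the computation and would cite \cite{Ding:20} or \cite[Lemma 2.4.1]{Xie:23} for the details, presenting only the two key first-derivative identities $\na_i H = h_{ij}\la x,\tau_j\ra$ and $\na_i\la x,\tau_j\ra = \delta_{ij} + Hh_{ij}$ as the starting point.
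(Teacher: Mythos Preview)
The paper does not prove this lemma at all; it states it as well-known and cites \cite{Ding:20} and \cite[Lemma 2.4.1]{Xie:23}, which are precisely the references you plan to invoke. So your bottom line --- cite and move on --- matches the paper exactly.

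That said, the explicit computation you sketch has a sign inconsistency that breaks both parts as written. Differentiating $\la\nu,\tau_j\ra=0$ gives $\la\overline\na_i\nu,\tau_j\ra+\la\nu,\overline\na_i\tau_j\ra=0$, so the pair $\overline\na_i\nu=h_{ij}\tau_j$ and $\overline\na_i\tau_j=h_{ij}\nu$ cannot both hold (together they force $h\equiv 0$); one of them needs a minus sign. With a consistent choice, say $\overline\na_i\nu=-h_{ij}\tau_j$ and $\overline\na_i\tau_j=h_{ij}\nu$ at a geodesic point, your second identity $\na_i\la x,\tau_j\ra=\delta_{ij}+Hh_{ij}$ is correct but the first becomes $\na_iH=-h_{ij}\la x,\tau_j\ra$, and tracing then yields $\Delta^\Sigma H=-\la\na H,x\ra-H-H|A|^2$, from which $\mL H+H(|A|^2+1)=0$ follows. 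As you have it, $\Delta^\Sigma H=\la\na H,x\ra+H+H|A|^2$ does \emph{not} rearrange to the claim: since $\mL=\Delta^\Sigma+\la\na\cdot,x\ra$, you would get $\mL H=2\la\na H,x\ra+H(|A|^2+1)$. The same sign issue propagates through your part (2) computation (and note also that $h_{ij}-|A|^2h_{ij}$ is $h_{ij}(1-|A|^2)$, not $-h_{ij}(1+|A|^2)$). Once the Weingarten/Gauss signs are made consistent, the cancellation of the $Hh_{il}h_{lj}$ terms and the rest of your argument go through exactly as you describe.
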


Next, recall that a cone $\mC\subset \R^{n+1}$ is a {\it regular cone} with only isolated singularity at the origin, if $\mC = \{\rho \Gamma^{n-1}, 0\leq \rho <\infty\}$, where $\Gamma^{n-1}$ is a smooth connected closed embedded submanifold of the unit sphere $\rS^n$. The induced metric on $\mC$ then takes the form $g_{\mC} = dr\otimes dr + r^2g_{\Gamma}$, where $r$ is the radial coordinate.

A self-expander $\Sigma^n \subset \R^{n+1}$ is said to be {\it asymptotically conical} if it is smoothly asymptotic to a regular cone $\mC$ such that
\begin{equation*}
	\mC = \lim_{t\rightarrow 0^+} \sqrt{t}\Sigma,
\end{equation*}
in the sense that, for any $R>0$ and $k\in \N$, the sets $\sqrt{t}\Sigma\cap (\bar{B}_R\backslash B_{1/R})$ converges to $\mC\cap (\bar{B}_R\backslash B_{1/R})$ with respect to the $C^k$ topology, as $t\rightarrow 0^+$; see, for example, \cite{BW:21a}.

Moreover, outside a compact set, an asymptotically conical self-expander can be parametrized by a graph over the asymptotic cone (see, e.g., \cite{Ding:20}). 

\begin{lem} \label{lem:Ding}
	Let $\Sigma^n \subset \R^{n+1}$ be an asymptotically conical self-expander with the asymptotic cone $\mC$. Then, for $R>0$ sufficiently large, there is a function $u \in C^{\infty}(\mC\backslash B_R(0))$ so that
	$$  \graph\ u:= \{ p:=\tilde{p}+u(\tilde{p})\nu_{\mC}(\tilde{p}) \mid \tilde{p}\in \mC \backslash B_R(0) \} \subset \Sigma,  $$
	where $\nu_{\mC}$ is the unit normal vector of $\mC$ at $\tilde{p}$ pointing to $p$. Moreover, the function $u$ satisfies
	\begin{equation*} \label{eq:decay_estimate}
		|\nabla^i_{\mC} u(\tilde{p})| = O(r^{-i-1}), \quad \text{for } i \geq 0,
	\end{equation*}
	as $r = |\tilde{p}| \to \infty$. Here and in the following, big $O$ means that $O(r^{-i-1}) \leq c\, r^{-i-1}$ for some positive constant $c$.
\end{lem}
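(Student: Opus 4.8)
The plan is to realize $\Sigma$ near infinity as a normal graph over $\mC$, to convert the self-expander equation \eqref{eq:selfexpander} into a quasilinear elliptic equation for the graph function $u$, and then to upgrade the crude sublinear bound that the definition of ``asymptotically conical'' supplies for free into the sharp decay by exploiting the structure of the linearized self-expander operator over the cone. Throughout I write $\tilde p\in\mC$ as $(\rho,\omega)$ with $\rho=|\tilde p|$, $\omega=\tilde p/\rho\in\Gamma$, so that $g_{\mC}=d\rho\otimes d\rho+\rho^2 g_{\Gamma}$.

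First I would extract the graph representation. Since $\mC$ is a cone, $\sqrt t\,\mC=\mC$ for all $t>0$ and $\nu_{\mC}$ is dilation-invariant; by hypothesis $\sqrt t\,\Sigma\to\mC$ in $C^k$ on each annulus $\bar B_{R_0}\setminus B_{1/R_0}$ as $t\to0^+$, so for small $t$ it is, on such an annulus, the normal graph over $\mC$ of a function $u_t$ with $\|u_t\|_{C^k}\to0$. Undoing the dilation and patching (the graph representation of a fixed surface over $\mC$ is unique where it exists) produces $R>0$ and $u\in C^{\infty}(\mC\setminus B_R(0))$ with $\graph u\subset\Sigma$, as in the statement; moreover $u_t(\tilde q)=\sqrt t\,u(\tilde q/\sqrt t)$, so, choosing $t=1/r^2$, the $C^k$-smallness of $u_t$ gives the crude bound $|\na^i_{\mC}u(\tilde p)|=o(r^{\,1-i})$ as $r=|\tilde p|\to\infty$, for every $i\ge0$. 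This is the only consequence of ``asymptotically conical'' I take; the content of the lemma is to improve it.

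Next I would write \eqref{eq:selfexpander} for the normal graph of $u$. Using $|x|^2=\rho^2+u^2$ on $\graph u$, the tangency of the position vector to $\mC$ (so that $\la x,\nu_{\mC}\ra\equiv0$), and the standard expansions of the mean curvature and of the unit normal under a normal perturbation, the equation takes the form
\[
L_{\mC}u=-H_{\mC}+\mathcal Q(\tilde p,u,\na_{\mC}u,\na^2_{\mC}u),\qquad L_{\mC}:=\Delta_{\mC}+|A_{\mC}|^2+\rho\,\pa_\rho-1,
\]
where $H_{\mC}=H_{\Gamma}(\omega)/\rho$ ($H_{\Gamma}$ being the mean curvature of $\Gamma\subset\rS^n$), $|A_{\mC}|^2=|A_{\Gamma}|^2/\rho^2$, and $\mathcal Q$ collects the terms at least quadratic in $(u,\na_{\mC}u,\na^2_{\mC}u)$. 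Since $\Delta_{\mC}=\pa_\rho^2+\tfrac{n-1}{\rho}\pa_\rho+\tfrac{1}{\rho^2}\Delta_{\Gamma}$, the operator $L_{\mC}$ coincides, up to terms of order $\rho^{-2}$, with $\rho\,\pa_\rho-1$.

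The heart of the argument — and the step I expect to be hardest — is the sharp decay. The crucial feature is that the drift term $\rho\,\pa_\rho$ makes $L_{\mC}$ confining at infinity: separating variables $v=\sum_\ell a_\ell(\rho)\phi_\ell(\omega)$ along an eigenbasis $\{\phi_\ell\}$ of $\Delta_{\Gamma}+|A_{\Gamma}|^2$ on $\Gamma$, each radial ODE has solution space spanned by a mode $\sim\rho$ and a mode decaying like $e^{-\rho^2/2}$ times a power of $\rho$. Hence every homogeneous solution of $L_{\mC}v=0$ that is $o(\rho)$ decays super-polynomially, so the polynomial decay of $u$ is controlled entirely by the inhomogeneity $-H_{\mC}\sim-\rho^{-1}$; since $(\rho\,\pa_\rho-1)\big(\tfrac12\rho^{-1}H_{\Gamma}(\omega)\big)=-H_{\mC}$, the model solution is $\tfrac12\rho^{-1}H_{\Gamma}(\omega)$, and the crude bound $u=o(\rho)$ from the first step is exactly what rules out the linear-growth mode. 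To make this rigorous I would (a) absorb $\mathcal Q$, which under the crude bounds is $o(1)$ times first- and zeroth-order terms, hence perturbs the coefficients of $L_{\mC}$ without changing its asymptotic type, after which a short bootstrap makes it genuinely lower order; and (b) run a comparison argument for $L_{\mC}$ on exhausting annuli $\{R_1<\rho<R_2\}$ against barriers built from $\rho^{-1}$ (and, near the inner boundary, from the decaying homogeneous mode), using that the zeroth-order coefficient of $L_{\mC}$ is $\approx-1<0$, to conclude $u=\tfrac12\rho^{-1}H_{\Gamma}(\omega)+O(\rho^{-1-\delta})$ for some $\delta>0$, in particular $u=O(\rho^{-1})$. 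The delicate points are the comparison argument in (b) and the angular dependence of $H_{\Gamma}$ and $|A_{\Gamma}|^2$ (handled, e.g., by projecting onto spherical harmonics on $\Gamma$ or by absorbing the bounded angular terms into the remainder); the detailed estimates are carried out in \cite{Ding:20} (cf.\ also \cite{BW:21a}). Finally, once $|u|=O(\rho^{-1})$ on $\mC\setminus B_R$, the full decay $|\na^i_{\mC}u(\tilde p)|=O(r^{-i-1})$ for all $i\ge0$ follows by differentiating the graph equation and applying interior elliptic estimates in weighted Hölder spaces adapted to the conical end of $\mC$ — equivalently, by rescaling the annulus $\{\rho_0/2\le\rho\le2\rho_0\}$ to unit size for each large $\rho_0$ — as in \cite{Ding:20,BW:21a}.
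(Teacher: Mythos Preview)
The paper does not supply its own proof of this lemma: it is stated as a known fact with the parenthetical reference ``(see, e.g., \cite{Ding:20})'' immediately preceding the statement, and the text moves directly on to the next lemma. So there is no ``paper's proof'' to compare against; your proposal is already more than what appears in the paper.

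That said, your sketch is a correct outline of how the result is actually proved in the references you cite. The extraction of the graph function with the crude bound $|\na^i_{\mC}u|=o(r^{1-i})$ from the dilation convergence is right (your scaling computation is accurate), the linearized operator $L_{\mC}=\Delta_{\mC}+|A_{\mC}|^2+\rho\,\pa_\rho-1$ is the correct one, and the dichotomy between the linear-growth and the exponentially decaying homogeneous modes of the radial ODE is exactly what forces the sharp rate once you know $u=o(\rho)$. The steps you flag as delicate --- absorbing the quadratic remainder $\mathcal Q$, the comparison/barrier argument for $L_{\mC}$, and the rescaled interior Schauder estimates to pass from $|u|=O(\rho^{-1})$ to the higher derivative bounds --- are precisely where the work lies in \cite{Ding:20} and \cite{BW:21a}, and you are right to point there for the details. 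There is no gap in the plan.
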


Finally, concerning the decay rate of the principal curvatures of asymptotically conical self-expanders, we have the following result by Xie-Yu \cite[Lemma 4.2]{Xie-Yu:23}.

\begin{lem} \textup{(\cite{Xie-Yu:23})} \label{lem:Xie-Yu}
	Let $\Sigma^n \subset \R^{n+1}$ be an asymptotically conical self-expander with the asymptotic cone $\mC$. For $1\leq i \leq n$, denote $\kappa_i$ as the principal curvatures of $\Sigma$ and $\tilde{\kappa}_i$ as the corresponding principal curvatures of $\mC$. Then, we have
	$$ \kappa_i = O(r^{-3}),\ \text{if}\ \tilde{\kappa}_i = 0; \quad \kappa_i \approx r^{-1},\ \text{if}\ \tilde{\kappa}_i \neq 0,$$
	as $r=|\tilde{p}|$ goes to infinity.
\end{lem}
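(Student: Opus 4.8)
The plan is to read off the curvature asymptotics directly from the graphical description of $\Sigma$ near infinity provided by Lemma~\ref{lem:Ding}. Introduce conical coordinates $(r,\omega)$ on $\mC\setminus B_R(0)$, with position vector $\tilde p=r\omega$ and $\omega\in\Gamma^{n-1}\subset\rS^n$, so that $g_{\mC}=dr\otimes dr+r^2g_{\Gamma}$, and fix the orthonormal frame for $\mC$ given by $e_0=\pa_r$ and $e_j=r^{-1}\hat e_j$ ($1\le j\le n-1$), where $\{\hat e_j\}$ is an orthonormal frame on $(\Gamma,g_{\Gamma})$ diagonalizing the second fundamental form of $\Gamma\subset\rS^n$. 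In this frame the second fundamental form of $\mC$ is diagonal, with $h_{\mC}(e_0,\cdot)=0$ and $h_{\mC}(e_j,e_j)=r^{-1}\hat\kappa_j$, where the $\hat\kappa_j$ are the bounded, $r$-independent principal curvatures of $\Gamma$; hence $\tilde\kappa_i=0$ precisely for the radial direction and for each $i$ with $\hat\kappa_i=0$, while $|\tilde\kappa_i|\asymp r^{-1}$ otherwise. I also record the purely conical bounds $|h_{\mC}|=O(r^{-1})$, $|(h_{\mC})^2|=O(r^{-2})$, $|\na_{\mC}h_{\mC}|=O(r^{-2})$, together with the fact that $\omega$ and $\nu_{\mC}$ are constant along each ray.

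Next I would write $\Sigma=\graph u$ near infinity, pull the metric and second fundamental form of $\Sigma$ back to $\mC\setminus B_R(0)$ via the graph map $\tilde p\mapsto\tilde p+u\,\nu_{\mC}$, and compare them with those of $\mC$ in the frame $\{e_a\}$. Differentiating the graph map twice and using the Gauss and Weingarten relations on $\mC$ gives
\[
  g_{\Sigma}(e_a,e_b)=\delta_{ab}-2u\,h_{\mC}(e_a,e_b)+O\big(|u|^2|h_{\mC}|^2+|\na u|^2\big),
\]
\[
  h_{\Sigma}(e_a,e_b)=h_{\mC}(e_a,e_b)+(\na_{\mC}\na_{\mC}u)(e_a,e_b)-u\,(h_{\mC})^2(e_a,e_b)+\mathcal R_{ab},
\]
where $\mathcal R_{ab}$ collects terms each carrying at least one derivative of $u$ multiplied by $h_{\mC}$ or $\na_{\mC}h_{\mC}$ (together with an overall factor $(1+|\na u|^2)^{-1/2}$). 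Inserting Ding's decay estimates $|\na_{\mC}^i u|=O(r^{-i-1})$ from Lemma~\ref{lem:Ding} and the conical bounds above, every correction term to $h_{\mC}$ is $O(r^{-3})$: indeed $\na_{\mC}\na_{\mC}u=O(r^{-3})$, $u(h_{\mC})^2=O(r^{-1})\cdot O(r^{-2})=O(r^{-3})$, and $\mathcal R_{ab}=O(r^{-3})$; likewise $g_{\Sigma}(e_a,e_b)=\delta_{ab}+O(r^{-2})$. So, in the frame $\{e_a\}$, one has $h_{\Sigma}=\diag\!\big(0,\dots,0,\hat\kappa_i/r,\dots\big)+O(r^{-3})$ and $g_{\Sigma}=\mathrm{Id}+O(r^{-2})$.

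Finally I would invoke elementary eigenvalue perturbation for the generalized eigenvalue problem $h_{\Sigma}v=\kappa\,g_{\Sigma}v$, i.e.\ for the symmetric matrix $(\mathrm{Id}+O(r^{-2}))^{-1}(h_{\mC}+O(r^{-3}))$. Since the flat block of $h_{\mC}$ equals $0$ and the gap separating it from the non-flat eigenvalues is $\asymp r^{-1}$, which dominates the $O(r^{-3})$ perturbation, the perturbed eigenvalues near the flat block remain within $O(r^{-3})$ of $0$, so $\kappa_i=O(r^{-3})$ whenever $\tilde\kappa_i=0$; the remaining eigenvalues equal $\hat\kappa_i r^{-1}(1+O(r^{-2}))+O(r^{-3})$, so $|\kappa_i|\asymp r^{-1}$ whenever $\tilde\kappa_i\neq0$. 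Matching ordered eigenvalues of $h_{\Sigma}$ and $h_{\mC}$ by continuity --- legitimate since the relevant eigenvalue gaps are $\asymp r^{-1}$ --- gives exactly the stated asymptotics.

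The main obstacle is the middle step: the expansion of $h_{\Sigma}$ must be carried out carefully enough to confirm that there is no linear-in-$h_{\mC}$ correction term, so that after using the decay of $u$ every correction is genuinely $O(r^{-3})$ rather than merely $O(r^{-2})$; alternatively one may appeal to the finer asymptotic expansion of $u$ underlying Ding's estimates. A minor additional point is the meaning of ``$\approx r^{-1}$'': the lower bound $|\kappa_i|\gtrsim r^{-1}$ in the non-flat directions holds wherever the corresponding link curvature $\hat\kappa_i$ stays bounded away from $0$, which is automatic since $\Gamma$ is compact and $\hat\kappa_i$ is continuous and (by hypothesis) nonvanishing there.
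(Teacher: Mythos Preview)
The paper does not supply a proof of this lemma: it is quoted verbatim from \cite[Lemma~4.2]{Xie-Yu:23} and used as a black box in Case~2 of the proof of Theorem~\ref{thm:selfexpander}. So there is no in-paper argument to compare against.

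Your sketch is the natural route and is essentially correct. Writing $\Sigma$ as a normal graph over $\mC$ via Lemma~\ref{lem:Ding}, expanding $g_\Sigma$ and $h_\Sigma$ in the conical frame, and then invoking Weyl's inequality for the ordered eigenvalues of $g_\Sigma^{-1}h_\Sigma$ is exactly how one proves such decay estimates; this is presumably also what \cite{Xie-Yu:23} does. Two small comments. First, the gap discussion in your last paragraph is unnecessary: Weyl's inequality gives $|\kappa_i(\Sigma)-\tilde\kappa_i(\mC)|\le \|S_\Sigma-S_\mC\|=O(r^{-3})$ for ordered eigenvalues with no gap hypothesis, and this already yields both conclusions once you know $S_\Sigma=S_\mC+O(r^{-3})$. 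Second, your ``main obstacle'' is indeed the crux, and your resolution is right: the only potentially dangerous term would be one linear in both $u$ and $h_\mC$, but the Gauss--Weingarten expansion of the normal graph produces $h_\mC+\na_\mC^2 u-u\,(h_\mC)^2$ at leading order, and every remaining term carries either an extra factor of $h_\mC$, a derivative of $u$, or a connection coefficient of the conical frame (all $O(r^{-1})$), so all corrections are $O(r^{-3})$. For the application in the paper you only need $\kappa_1=O(r^{-3})$ and $H\approx r^{-1}$, and the latter follows since $\tilde H>0$ on the compact link $\Gamma$ is bounded below, so the pointwise issue you flag about ``$\approx r^{-1}$'' does not arise there.
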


\smallskip
\section{An elliptic differential inequality for self-expanders} \label{sec:3}
In this section, based on the key algebraic identity in Lemma \ref{lem:key}, we establish an elliptic differential inequality (Lemma \ref{lem:L-leq-0}) for self-expanders. As an application, we invoke the strong maximum principle to derive Lemma \ref{lem:noiinteriorinf}, which rules out the existence of negative interior minima for the function $\lambda_k/H$ on $\Sigma^k$.

First of all, following Spruck-Xiao \cite{SX:20}, we define the cutoff function
\begin{equation*}
	\vphi \left( t\right) :=\left\{ 
	\begin{array}{ccc}
		-t^4e^{-1/t^2} & \text{if} & t<0 \\ 
		0 & \text{if} & t\geq 0.
	\end{array}
	\right.
\end{equation*}
It is easy to check that $\dot{\vphi} >0$ and $\ddot{\vphi}<0$ for $t<0$. Moreover, $\dot{\vphi}$ and $\ddot{\vphi}$ are both bounded away from 0 on $[t_1, t_2]$, for any $t_1<t_2<0$.

Fix an index $k$ with $1 \leq k < n$. Recall that $\lambda_k = \sum_{s=1}^{k} \kappa_s$, and consider the composition function
$$\vphi := \vphi\left( \lambda_k / H \right).$$ 
Then $\vphi$ is continuous on $\Sigma$ and, by Lemma \ref{lem:smooth-lambda_k}, smooth on the open subset $\Sigma^k = \{ p \in \Sigma : \kappa_k(p) < \kappa_{k+1}(p) \}$.

Next, we define the linear elliptic operator $L$ by
\begin{equation*}
	L\vphi := \Delta^{\Sigma} \vphi + \la \na \vphi, x \ra + 2\lla \na \vphi, \frac{\na H}{H} \rra.
\end{equation*}

\begin{lem} \label{lem:eqoflambda_k}
	Let $\Sigma^n \subset \R^{n+1}$ be a complete immersed two-sided mean convex self-expander. Then, on each connected component of $\Sigma_A$, for a fixed index $k$ with $1\leq k<n$ and the function $\vphi = \vphi\left( \lambda_k / H \right)$, we have
	\begin{equation*}
		\begin{split}
			L\vphi &= \ddot{\vphi}\left|\na \left( \frac{\lambda_k}{H} \right) \right|^2 + \frac{2\dot{\vphi}}{H} \sum_{s=1}^{k} \sum_{\kappa_l\neq \kappa_s} \frac{|\na h_{ls}|^2}{\kappa_s-\kappa_l}.
		\end{split}
	\end{equation*}
\end{lem}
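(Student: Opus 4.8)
The plan is to compute $L\vphi$ by the chain rule, reducing everything to a drift-Laplacian computation for $\lambda_k/H$, and then to use the soliton equations of Lemma \ref{lem:preexpander} together with the local formula \eqref{eq:local_4} from Lemma \ref{lem:localcompute} to simplify the result. Since $\vphi = \vphi(\lambda_k/H)$ with $\lambda_k/H$ smooth on $\Sigma^k$ and certainly smooth on each component of $\Sigma_A$ (where the principal curvatures are smooth by Lemma \ref{lem:derzinski}), the chain rule gives
\begin{equation*}
	\na\vphi = \dot{\vphi}\,\na\Lpr{\frac{\lambda_k}{H}}, \qquad \Delta^{\Sigma}\vphi = \ddot{\vphi}\,\Lab{\na\Lpr{\frac{\lambda_k}{H}}}^2 + \dot{\vphi}\,\Delta^{\Sigma}\Lpr{\frac{\lambda_k}{H}}.
\end{equation*}
Substituting into the definition of $L$ collects the $\ddot{\vphi}$ term immediately, and leaves $\dot{\vphi}$ times the expression $\Delta^{\Sigma}(\lambda_k/H) + \la\na(\lambda_k/H),x\ra + 2\la\na(\lambda_k/H),\na H/H\ra$, which is exactly $\mL(\lambda_k/H) + 2\la\na(\lambda_k/H),\na\log H\ra$ in the notation of Lemma \ref{lem:preexpander}. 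So the whole problem reduces to showing
\begin{equation*}
	\mL\Lpr{\frac{\lambda_k}{H}} + 2\lla \na\Lpr{\frac{\lambda_k}{H}}, \frac{\na H}{H}\rra = \frac{2}{H}\sum_{s=1}^{k}\sum_{\kappa_l\neq\kappa_s}\frac{|\na h_{ls}|^2}{\kappa_s-\kappa_l}.
\end{equation*}

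Next I would expand $\mL(\lambda_k/H)$ using the standard quotient identity for a drift Laplacian: for smooth functions $f,g$ with $g>0$,
\begin{equation*}
	\mL\Lpr{\frac{f}{g}} = \frac{\mL f}{g} - \frac{f}{g^2}\,\mL g - \frac{2}{g}\lla \na\Lpr{\frac{f}{g}}, \na g \rra.
\end{equation*}
Applying this with $f = \lambda_k$, $g = H$, the last term here is precisely $-\tfrac{2}{H}\la\na(\lambda_k/H),\na H\ra$, which cancels the extra term $2\la\na(\lambda_k/H),\na H/H\ra$ on the left of the reduced identity. Hence the target collapses to
\begin{equation*}
	\frac{\mL\lambda_k}{H} - \frac{\lambda_k}{H^2}\,\mL H = \frac{2}{H}\sum_{s=1}^{k}\sum_{\kappa_l\neq\kappa_s}\frac{|\na h_{ls}|^2}{\kappa_s-\kappa_l},
\end{equation*}
i.e., multiplying by $H$, one needs $\mL\lambda_k - \tfrac{\lambda_k}{H}\mL H = 2\sum_{s=1}^{k}\sum_{\kappa_l\neq\kappa_s}\frac{|\na h_{ls}|^2}{\kappa_s-\kappa_l}$. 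By Lemma \ref{lem:preexpander}(1), $\mL H = -H(|A|^2+1)$, so $-\tfrac{\lambda_k}{H}\mL H = \lambda_k(|A|^2+1)$. It remains to compute $\mL\lambda_k$.

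For $\mL\lambda_k = \sum_{s=1}^k \mL\kappa_s$, I would work in the adapted moving frame at a point $p$ of $\Sigma_A$. From \eqref{eq:local_1}, \eqref{eq:local_4} we have $\Delta\kappa_s = \Delta h_{ss} + 2\sum_{\kappa_l\neq\kappa_s}\frac{|\na h_{sl}|^2}{\kappa_s-\kappa_l}$ and $\na\kappa_s = \na h_{ss}$, so $\mL\kappa_s = (\mL A)(\tau_s,\tau_s) + 2\sum_{\kappa_l\neq\kappa_s}\frac{|\na h_{sl}|^2}{\kappa_s-\kappa_l}$, where $\mL A$ is the component-wise drift Laplacian — one must check that $\mL$ applied to the function $h_{ss}=A(\tau_s,\tau_s)$ indeed produces $(\Delta A)(\tau_s,\tau_s) + \la\na A, x\ra(\tau_s,\tau_s)$ plus the same correction terms coming through $\na_j\tau_s$, which is where the $c^l_{js}$ coefficients enter; the argument in the proof of Lemma \ref{lem:localcompute} for $\Delta\kappa_i$ carries over verbatim with the extra drift term $\la\na\cdot,x\ra$ contributing no additional connection terms since $\la\na_j\tau_s,\tau_s\ra = c^s_{js}=0$ by \eqref{eq:c_{ji}^{i}}. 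Then Lemma \ref{lem:preexpander}(2) gives $(\mL A)(\tau_s,\tau_s) = -\kappa_s(|A|^2+1)$, so $\mL\kappa_s = -\kappa_s(|A|^2+1) + 2\sum_{\kappa_l\neq\kappa_s}\frac{|\na h_{sl}|^2}{\kappa_s-\kappa_l}$. Summing over $s=1,\dots,k$ yields $\mL\lambda_k = -\lambda_k(|A|^2+1) + 2\sum_{s=1}^k\sum_{\kappa_l\neq\kappa_s}\frac{|\na h_{sl}|^2}{\kappa_s-\kappa_l}$, and adding $\lambda_k(|A|^2+1)$ cancels the first term, leaving exactly the right-hand side.

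The main obstacle I anticipate is not any single computation but the careful bookkeeping of the connection terms $c^l_{js}$ when passing from $\mL$ acting on the \emph{tensor} $A$ to $\mL$ acting on the \emph{functions} $\kappa_s = h_{ss}$: one must confirm that all cross terms either vanish by the antisymmetry \eqref{eq:local_antisym}/\eqref{eq:c_{ji}^{i}} or are exactly captured by the $\frac{|\na h_{sl}|^2}{\kappa_s-\kappa_l}$ sum via \eqref{eq:local_3}, and that the first-order drift term $\la\na\cdot,x\ra$ behaves like a pure first-order operator with no new correction (which it does, since it involves only one covariant derivative, and $\na_x\tau_s$ contributes $\sum_l c^l_{xs}\tau_l$ whose $\tau_s$-component is $c^s_{xs}=0$). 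Once this is in place, the soliton equations plug in cleanly and the $(|A|^2+1)$ terms cancel in pairs, which is the structural reason the formula comes out so clean. I would also note at the end that since $\dot\vphi>0$, $\ddot\vphi<0$ for $t<0$ and $\dot\vphi=\ddot\vphi=0$ for $t\ge0$, and since $\Sigma_A$ is open dense, the identity on each component of $\Sigma_A$ suffices for the subsequent maximum-principle arguments on $\Sigma^k$ after invoking continuity/density, exactly as stated.
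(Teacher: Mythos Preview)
Your proposal is correct and follows essentially the same approach as the paper's proof: chain rule for $\vphi(\lambda_k/H)$, the quotient identity, Lemma~\ref{lem:localcompute} for $\Delta\kappa_s$ and $\na\kappa_s$, and the soliton equations of Lemma~\ref{lem:preexpander} to cancel the $(|A|^2+1)$ terms. The only cosmetic difference is that you package the computation through the drift Laplacian $\mL$ and its quotient rule, whereas the paper applies the quotient identity to $\Delta^{\Sigma}$ and handles the drift term separately; the ingredients and cancellations are identical.
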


\begin{proof}
	First of all, by direct computations, we have
	\begin{equation*} \label{eq:Sec3-lem1-1}
		\begin{split}
			\Delta^{\Sigma}\vphi\left( \frac{\lambda_k}{H} \right)
			&= \dot{\vphi}\Delta^{\Sigma}\lp \frac{\lambda_k}{H}\rp + \ddot{\vphi} \bigg|\na \lp\frac{\lambda_k}{H}\rp \bigg|^2  \\
			&= \frac{\dot{\vphi}}{H^2} \lp H\Delta^{\Sigma}\lambda_k - \lambda_k\Delta^{\Sigma}H \rp - 2\dot{\vphi} \lla  \na \lp \frac{\lambda_k}{H} \rp, \frac{\na H}{H} \rra + \ddot{\vphi} \bigg|\na \lp\frac{\lambda_k}{H}\rp \bigg|^2.
		\end{split}
	\end{equation*}
	On the other hand, by Lemma \ref{lem:localcompute} and Lemma \ref{lem:preexpander}, we have
	\begin{equation*} \label{eq:Sec3-lem1-2}
		\begin{split}
			H\Delta^{\Sigma}\lambda_k - \lambda_k\Delta^{\Sigma}H
			&= \sum_{s=1}^{k} \left( H\left(\Delta^{\Sigma} h_{ss} + 2\sum_{\kappa_l\neq \kappa_s}\frac{|\na h_{sl}|^2}{\kappa_s-\kappa_l}\right) - \kappa_s \Delta^{\Sigma}H \right) \\
			&= \sum_{s=1}^{k} H(-\la \na \kappa_s, x \ra - \kappa_s(|A|^2+1))+ 2H\sum_{s=1}^{k} \sum_{\kappa_l\neq \kappa_s} \frac{|\na h_{sl}|^2}{\kappa_s-\kappa_l} \\
			&\quad+ \sum_{s=1}^{k} \kappa_s (\la \na H, x\ra + H(|A|^2+1))\\
			&= -\sum_{s=1}^{k} \la H\na \kappa_s - \kappa_s \na H, x \ra + 2H\sum_{s=1}^{k} \sum_{\kappa_l\neq \kappa_s} \frac{|\na h_{sl}|^2}{\kappa_s-\kappa_l} \\
			&=-H^2\lla \na\lp \frac{\lambda_k}{H}\rp, x \rra + 2H\sum_{s=1}^{k} \sum_{\kappa_l\neq \kappa_s} \frac{|\na h_{sl}|^2}{\kappa_s-\kappa_l}.
		\end{split}
	\end{equation*}
	Combining the above two equations, Lemma \ref{lem:eqoflambda_k} follows.
\end{proof}

\begin{rmk} \label{rmk:eqoflambda_k}
	By essentially the same computations, the above lemma holds analogously for self-shrinkers and translators.
\end{rmk}

On each connected component of $\Sigma_A$, without loss of generality, for a fixed index $k$ with $1\leq k<n$, we may assume $\kappa_k = \kappa_i^{\prime}$ so that the index $k$ corresponds to the $i$-th distinct principal curvature. Here each $\kappa_i^{\prime}$ has multiplicity $d_i$ for $1\leq i \leq m$, and the values $\kappa_1^{\prime}, \cdots, \kappa_m^{\prime}$ are mutually distinct. Setting $r_0=d_0=0$, we then have
\begin{equation*}
	r_{i-1}=d_1+\cdots + d_{i-1} < k \leq d_1+\cdots +d_i = r_i.
\end{equation*}
We observe the following key algebraic identity that may be of independent interest.

\begin{lem} \label{lem:key}
	Let $\Sigma^n \subset \R^{n+1}$ be an immersed two-sided hypersurface. Then, on each connected component of $\Sigma_A$, for fixed indices $k$ and $i$ as above, we have
	\begin{equation*}
		\begin{split}
			\sum_{s=1}^{k} \sum_{\kappa_l\neq \kappa_s} \frac{|\na h_{ls}|^2}{\kappa_s-\kappa_l}
			&= \sum_{s=1}^{r_{i-1}}\sum_{l=k+1}^{n}\frac{|\na h_{ls}|^2}{\kappa_s - \kappa_l} + \sum_{s=r_{i-1}+1}^{k}\sum_{l=r_i+1}^{n}\frac{|\na h_{ls}|^2}{\kappa_s - \kappa_l} \leq 0.
		\end{split}
	\end{equation*}
\end{lem}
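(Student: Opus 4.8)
The plan is to evaluate the double sum $\sum_{s=1}^{k}\sum_{\kappa_l\neq\kappa_s}\frac{|\na h_{ls}|^2}{\kappa_s-\kappa_l}$ by splitting the inner index $l$ according to which block of the principal-curvature partition it lies in, and then exploiting two structural facts: first, that by the labeling $\kappa_1\le\cdots\le\kappa_n$ together with Lemma \ref{lem:localcompute}, $\na h_{ls}=0$ whenever $\kappa_l=\kappa_s$ (so only terms with $\kappa_l\neq\kappa_s$ survive, i.e. $l$ and $s$ are in different blocks); and second, that within the range $1\le s\le k$ and $1\le l\le k$ there is a pairing symmetry that forces cancellation. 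Concretely, I would fix $i$ with $r_{i-1}<k\le r_i$ as in the setup, and partition the pairs $(s,l)$ with $1\le s\le k$, $\kappa_s\neq\kappa_l$ into three groups: (a) $1\le s\le r_{i-1}$ and $1\le l\le k$; (b) $1\le s\le k$ and $k<l\le r_i$, equivalently $r_{i-1}<l\le r_i$ since $\kappa_l\neq\kappa_s$ forces $l>r_{i-1}$; and (c) $1\le s\le k$ and $r_i<l\le n$. Group (c) further splits into the two pieces that appear in the statement: $s\le r_{i-1}$ gives $\sum_{s=1}^{r_{i-1}}\sum_{l=r_i+1}^{n}$ (note $l>r_i\ge k+1$, matching the stated $\sum_{l=k+1}^n$ after observing the $r_{i-1}<l\le r_i$ part of that sum contributes via group (b)), and $r_{i-1}<s\le k$ gives $\sum_{s=r_{i-1}+1}^{k}\sum_{l=r_i+1}^{n}$.

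The heart of the argument is that groups (a) and (b) cancel each other. For a pair $(s,l)$ in group (a) with $l$ also satisfying $k<l$ — wait, more carefully: in group (a), $s\le r_{i-1}$ and $l\le k$; if additionally $l>r_{i-1}$ then $\kappa_l=\kappa_k=\kappa_i'$, and by the symmetry $|\na h_{ls}|=|\na h_{sl}|$ the term $\frac{|\na h_{ls}|^2}{\kappa_s-\kappa_l}$ is matched by the term $\frac{|\na h_{sl}|^2}{\kappa_l-\kappa_s}$ coming from the outer index $l$ (which lies in $(r_{i-1},k]$) paired with inner index $s\le r_{i-1}$; this is exactly a piece of group (b) with the roles swapped). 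These two terms have opposite denominators and equal numerators, so they sum to zero. Thus all contributions with both indices in $\{1,\dots,k\}$ cancel in pairs, and one is left precisely with the pairs where one index lies in $\{1,\dots,k\}$ and the other in $\{k+1,\dots,n\}$ — after regrouping by the blocks this is exactly the right-hand side of the identity. Finally, for the inequality: in every surviving term we have $s\le k<k+1\le l$ in the first sum, hence $\kappa_s\le\kappa_k<\kappa_{k+1}\le\kappa_l$ on $\Sigma^k$ wait — on $\Sigma_A$ generally $\kappa_s\le\kappa_l$ with the denominator $\kappa_s-\kappa_l<0$ whenever $\kappa_s\neq\kappa_l$, and likewise in the second sum $s\le k<r_i+1\le l$; since numerators are nonnegative, each surviving term is $\le 0$, so the whole sum is $\le 0$.

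The main obstacle I anticipate is purely bookkeeping: correctly organizing the index ranges so that the cancellation between "group (a)" and "group (b)" is transparent and one genuinely recovers the stated right-hand side with no leftover or double-counted terms. The subtlety is that the outer sum $\sum_{s=1}^k$ runs through two different blocks — the full blocks $\kappa_1',\dots,\kappa_{i-1}'$ (indices $1$ through $r_{i-1}$) and the partial block $\kappa_i'$ (indices $r_{i-1}+1$ through $k$) — and the inner sum $\sum_{\kappa_l\neq\kappa_s}$ behaves differently in these two cases (for $s$ in the partial block, $l$ ranges over $1,\dots,r_{i-1}$ and $r_i+1,\dots,n$; for $s$ in a full earlier block, $l$ ranges over everything outside that block). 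Laying this out cleanly, and checking that the $\na h_{ls}=0$ rule kills precisely the within-block terms, is the only real work; the sign analysis at the end is immediate from the ordering of the $\kappa$'s. I would present it by writing $\sum_{s=1}^k = \sum_{s=1}^{r_{i-1}} + \sum_{s=r_{i-1}+1}^{k}$, expanding each inner sum over the relevant ranges of $l$, discarding zero terms, and then visibly matching the $l\le k$ portion of the first piece against part of the second piece to see the cancellation.
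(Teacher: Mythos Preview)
Your proposal is correct and follows essentially the same approach as the paper: both arguments rest on the observation that, for pairs $(s,l)$ with both indices in $\{1,\dots,k\}$ and $\kappa_s\neq\kappa_l$, the terms $\frac{|\na h_{ls}|^2}{\kappa_s-\kappa_l}$ and $\frac{|\na h_{sl}|^2}{\kappa_l-\kappa_s}$ cancel by the symmetry $|\na h_{ls}|=|\na h_{sl}|$, leaving only the ``cross'' terms with $s\le k<l$; these are then organized according to whether $s\le r_{i-1}$ or $r_{i-1}<s\le k$ to recover the stated right-hand side, and the sign follows from $\kappa_s\le\kappa_k\le\kappa_l$ in every surviving term. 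The paper carries this out by expanding the outer sum into all of the individual clusters $[1,r_1],[r_1+1,r_2],\dots,[r_{i-1}+1,k]$ and then computing $\mathrm{LHS}-\mathrm{RHS}$ explicitly as $i(i-1)$ terms that visibly cancel in pairs, whereas you phrase the cancellation principle more directly; but the underlying mechanism is identical, and your final paragraph describes exactly the bookkeeping the paper performs.
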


\begin{rmk}
	Note that if $i= 1$, then the first summation in the middle term of the equation above is absent. On the other hand, if $i = m $, then the second summation in the middle term does not appear.
\end{rmk}

\begin{proof}
	Without loss of generality, we may assume $r_i\neq r_1\neq r_m$. First, we denote
	\begin{equation*}
		\begin{split}
			{\bf LHS}:&=\sum_{s=1}^{k} \sum_{\kappa_l\neq \kappa_s} \frac{|\na h_{ls}|^2}{\kappa_s-\kappa_l} \\
			& =\sum_{s=1}^{r_1}\sum_{\kappa_l\neq \kappa_s} \frac{|\na h_{ls}|^2}{\kappa_s-\kappa_l} + \sum_{s=r_1+1}^{r_2}\sum_{\kappa_l\neq \kappa_s} \frac{|\na h_{ls}|^2}{\kappa_s-\kappa_l} + \cdots \\
			&\quad +\sum_{s=r_{i-2}+1}^{r_{i-1}} \sum_{\kappa_l\neq \kappa_{s}}\frac{|\na h_{ls}|^2}{\kappa_{s} - \kappa_l}+ \sum_{s=r_{i-1}+1}^{k}\sum_{\kappa_l\neq \kappa_s} \frac{|\na h_{ls}|^2}{\kappa_s-\kappa_l},
		\end{split}
	\end{equation*}
	and
	\begin{equation*}
		\begin{split}
			{\bf RHS} := \sum_{s=1}^{r_{i-1}}\sum_{l=k+1}^{n}\frac{|\na h_{ls}|^2}{\kappa_s - \kappa_l} + \sum_{s=r_{i-1}+1}^{k}\sum_{l=r_i+1}^{n}\frac{|\na h_{ls}|^2}{\kappa_s - \kappa_l}.
		\end{split}
	\end{equation*}
	
	Now, we expand the summations of {\bf LHS}. Recall that the principal curvatures $\kappa_1=\cdots =\kappa_{r_1}=\kappa_1^{\prime}$. Then, for each $1\leq s \leq r_1$ so that $\kappa_s=\kappa_1^{\prime}$, we have
	\begin{equation*}
		\begin{split}
			\sum_{\kappa_l\neq \kappa_s}\frac{|\na h_{ls}|^2}{\kappa_s - \kappa_l} &= \sum_{l=r_1+1}^{k}\frac{|\na h_{ls}|^2}{\kappa_1^{\prime} - \kappa_l} + \sum_{l=k+1}^{n}\frac{|\na h_{ls}|^2}{\kappa_1^{\prime} - \kappa_l}.
		\end{split}
	\end{equation*}
	Summing over $s=1,\cdots r_1$, we have
	\begin{equation*}
		\begin{split}
			\sum_{s=1}^{r_1}\sum_{\kappa_l\neq \kappa_s}\frac{|\na h_{ls}|^2}{\kappa_s - \kappa_l}
			&= \sum_{s=1}^{r_1}\sum_{l=r_1+1}^{k}\frac{|\na h_{ls}|^2}{\kappa_1^{\prime} - \kappa_l} + \sum_{s=1}^{r_1}\sum_{l=k+1}^{n}\frac{|\na h_{ls}|^2}{\kappa_1^{\prime} - \kappa_l}.
		\end{split}
	\end{equation*}
	
	For the next cluster of principal curvatures, we have $\kappa_{r_1+1} = \cdots =\kappa_{r_2}=\kappa_2^{\prime}$. Then, for each $r_1+1\leq s \leq r_2$ so that $\kappa_s=\kappa_2^{\prime}$, we have
	\begin{equation*}
		\begin{split}
			\sum_{\kappa_l\neq \kappa_{s}}\frac{|\na h_{ls}|^2}{\kappa_{s} - \kappa_l} &= \sum_{l=1}^{r_{1}}\frac{|\na h_{ls}|^2}{\kappa_{2}^{\prime} - \kappa_l}+\sum_{l=r_{2}+1}^{k}\frac{|\na h_{ls}|^2}{\kappa_{2}^{\prime} - \kappa_l} + \sum_{l=k+1}^{n}\frac{|\na h_{ls}|^2}{\kappa_{2}^{\prime} - \kappa_l}.
		\end{split}
	\end{equation*}
	Summing over the indices $s = r_1 + 1, \cdots, r_2$, we have
	\begin{equation*}
		\begin{split}
			\sum_{s=r_1+1}^{r_2}\sum_{\kappa_l\neq \kappa_{s}}\frac{|\na h_{ls}|^2}{\kappa_{s} - \kappa_l} &= \sum_{s=r_1+1}^{r_2}\sum_{l=1}^{r_{1}}\frac{|\na h_{ls}|^2}{\kappa_{2}^{\prime} - \kappa_l} +\sum_{s=r_1+1}^{r_2}\sum_{l=r_{2}+1}^{k}\frac{|\na h_{ls}|^2}{\kappa_{2}^{\prime} - \kappa_l} \\
			&\quad+ \sum_{s=r_1+1}^{r_2}\sum_{l=k+1}^{n}\frac{|\na h_{ls}|^2}{\kappa_{2}^{\prime} - \kappa_l}.
		\end{split}
	\end{equation*}
	
	Similarly, we can obtain the summations for other clusters. In particular, for each $r_{i-2}+1 \leq s \leq r_{i-1}$ so that $\kappa_s=\kappa_{i-1}^{\prime}$, we have
	\begin{equation*}
		\begin{split}
			\sum_{s=r_{i-2}+1}^{r_{i-1}} \sum_{\kappa_l\neq \kappa_{s}}\frac{|\na h_{ls}|^2}{\kappa_{s} - \kappa_l} &= \sum_{s=r_{i-2}+1}^{r_{i-1}} \sum_{l=1}^{r_{i-2}}\frac{|\na h_{ls}|^2}{\kappa_{i-1}^{\prime} - \kappa_l} + \sum_{s=r_{i-2}+1}^{r_{i-1}}\sum_{l=r_{i-1}+1}^{k}\frac{|\na h_{ls}|^2}{\kappa_{i-1}^{\prime} - \kappa_l} \\
			&\quad + \sum_{s=r_{i-2}+1}^{r_{i-1}}\sum_{l=k+1}^{n}\frac{|\na h_{ls}|^2}{\kappa_{i-1}^{\prime} - \kappa_l}.
		\end{split}
	\end{equation*}
	
	Moreover, for the final cluster of principal curvatures, we have $\kappa_{r_{i-1}+1} = \cdots =\kappa_{k}=\kappa_i^{\prime}$. Then, for each $r_{i-1}+1 \leq s \leq k$ so that $\kappa_s=\kappa_{i}^{\prime}$, we have
	\begin{equation*}
		\begin{split}
			\sum_{\kappa_l\neq \kappa_{s}}\frac{|\na h_{ls}|^2}{\kappa_{s} - \kappa_l} &= \sum_{l=1}^{r_{i-1}}\frac{|\na h_{ls}|^2}{\kappa_{i}^{\prime} - \kappa_l} + \sum_{l=r_{i}+1}^{n}\frac{|\na h_{ls}|^2}{\kappa_{i}^{\prime} - \kappa_l}.
		\end{split}
	\end{equation*}
	Summing over $s=r_{i-1}+1,\cdots, k$, we have
	\begin{equation*}
		\begin{split}
			\sum_{s=r_{i-1}+1}^{k}\sum_{\kappa_l\neq \kappa_{s}}\frac{|\na h_{ls}|^2}{\kappa_{s} - \kappa_l} &= \sum_{s=r_{i-1}+1}^{k}\sum_{l=1}^{r_{i-1}}\frac{|\na h_{ls}|^2}{\kappa_{i}^{\prime} - \kappa_l} + \sum_{s=r_{i-1}+1}^{k}\sum_{l=r_{i}+1}^{n}\frac{|\na h_{ls}|^2}{\kappa_{i}^{\prime} - \kappa_l}.
		\end{split}
	\end{equation*}
	Note that the second term on the right hand side of the above equation coincides exactly with the second term of {\bf RHS}.
	
	Next, we consider the first term of {\bf RHS}. Since
	\begin{equation*}
		\begin{split}
			&\kappa_s = \kappa_1^{\prime}, \quad 1\leq s \leq r_1; \\
			&\kappa_s = \kappa_2^{\prime}, \quad r_1+1\leq s \leq r_2; \\
			&\quad \cdots \quad \\
			&\kappa_s = \kappa_{i-1}^{\prime}, \quad r_{i-2}+1 \leq s \leq r_{i-1}, 
		\end{split}
	\end{equation*}
	we may rewrite the sum
	\begin{equation*}
		\begin{split}
			\sum_{s=1}^{r_{i-1}}\sum_{l=k+1}^{n}\frac{|\na h_{ls}|^2}{\kappa_s - \kappa_l} &=\sum_{s=1}^{r_1}\sum_{l=k+1}^{n}\frac{|\na h_{ls}|^2}{\kappa_1^{\prime} - \kappa_l} +\sum_{s=r_1+1}^{r_2}\sum_{l=k+1}^{n}\frac{|\na h_{ls}|^2}{\kappa_{2}^{\prime} - \kappa_l} \\
			&\quad +\cdots +\sum_{s=r_{i-2}+1}^{r_{i-1}}\sum_{l=k+1}^{n}\frac{|\na h_{ls}|^2}{\kappa_{i-1}^{\prime} - \kappa_l}.
		\end{split}
	\end{equation*}
	
	Combining all the expressions derived above, we have
	\begin{equation} \label{eq:LHS-RHS}
		\begin{split}
			{\bf LHS} - {\bf RHS} &=\sum_{s=1}^{r_1}\sum_{l=r_1+1}^{k}\frac{|\na h_{ls}|^2}{\kappa_1^{\prime} - \kappa_l} +\sum_{s=r_1+1}^{r_2}\sum_{l=1}^{r_{1}}\frac{|\na h_{ls}|^2}{\kappa_{2}^{\prime} - \kappa_l}\\
			&\quad +\sum_{s=r_1+1}^{r_2}\sum_{l=r_{2}+1}^{k}\frac{|\na h_{ls}|^2}{\kappa_{2}^{\prime} - \kappa_l} +  \cdots + \sum_{s=r_{i-2}+1}^{r_{i-1}} \sum_{l=1}^{r_{i-2}}\frac{|\na h_{ls}|^2}{\kappa_{i-1}^{\prime} - \kappa_l} \\
			&\quad + \sum_{s=r_{i-2}+1}^{r_{i-1}}\sum_{l=r_{i-1}+1}^{k}\frac{|\na h_{ls}|^2}{\kappa_{i-1}^{\prime} - \kappa_l} +\sum_{s=r_{i-1}+1}^{k}\sum_{l=1}^{r_{i-1}}\frac{|\na h_{ls}|^2}{\kappa_{i}^{\prime} - \kappa_l}.
		\end{split}
	\end{equation}
	Again, note that
	\begin{equation*}
		\begin{split}
			&\kappa_l = \kappa_1^{\prime}, \quad 1\leq l \leq r_1; \\
			&\quad \cdots \quad \\
			&\kappa_l = \kappa_{i-1}^{\prime}, \quad r_{i-2}+1 \leq l \leq r_{i-1};\\
			&\kappa_l = \kappa_{i}^{\prime}, \quad r_{i-1}+1 \leq l \leq k.
		\end{split}
	\end{equation*}
	Then, by rewriting (\ref{eq:LHS-RHS}) in a symmetric form, we group the terms according to distinct pairs of principal curvature clusters. This yields a total of $i(i-1)$ terms:
	\begin{equation*}
		\begin{split}
			&\quad {\bf LHS} - {\bf RHS} \\
			&= \sum_{s=1}^{r_1} \left(  \sum_{l=r_1+1}^{r_2} \frac{|\na h_{ls}|^2}{\kappa^{\prime}_{1}-\kappa^{\prime}_{2}} + \dotsb + \sum_{l=r_{i-2}+1}^{r_{i-1}} \frac{|\na h_{ls}|^2}{\kappa^{\prime}_{{1}}-\kappa^{\prime}_{i-1}} + \sum_{l=r_{i-1}+1}^{k} \frac{|\na h_{ls}|^2}{\kappa^{\prime}_{1}-\kappa^{\prime}_{i}} \right) \\
			&\quad + \sum_{s=r_1+1}^{r_2}\left(  \sum_{l=1}^{r_1}\frac{|\na h_{ls}|^2}{\kappa^{\prime}_{2}-\kappa^{\prime}_{1}} + \dotsb + \sum_{l=r_{i-2}+1}^{r_{i-1}}\frac{|\na h_{ls}|^2}{\kappa^{\prime}_{2}-\kappa^{\prime}_{i-1}} + \sum_{l=r_{i-1}+1}^{k} \frac{|\na h_{ls}|^2}{\kappa^{\prime}_{2}-\kappa^{\prime}_{i}}\right) + \cdots \\
			&\quad + \sum_{s=r_{i-2}+1}^{r_{i-1}}\left( \sum_{l=1}^{r_1} \frac{|\na h_{ls}|^2}{\kappa^{\prime}_{i-1}-\kappa^{\prime}_{1}} + \dotsb + \sum_{l=r_{i-3}+1}^{r_{i-2}} \frac{|\na h_{ls}|^2}{\kappa^{\prime}_{{i-1}}-\kappa^{\prime}_{{i-2}}} + \sum_{l=r_{i-1}+1}^{k} \frac{|\na h_{ls}|^2}{\kappa^{\prime}_{i-1}-\kappa^{\prime}_{{i}}} \right)\\
			&\quad + \sum_{s=r_{i-1}+1}^{k}\left(  \sum_{l=1}^{r_1} \frac{|\na h_{ls}|^2}{\kappa^{\prime}_{i}-\kappa^{\prime}_{1}} +  \sum_{l=r_1+1}^{r_2} \frac{|\na h_{ls}|^2}{\kappa^{\prime}_{i}-\kappa^{\prime}_{2}} +\dotsb + \sum_{l=r_{i-2}+1}^{r_{i-1}} \frac{|\na h_{ls}|^2}{\kappa^{\prime}_{{i}}-\kappa^{\prime}_{i-1}} \right)\\
			&= 0,
		\end{split}
	\end{equation*}
	where the last equality follows from the symmetry of the second fundamental form $A=\{h_{ls}\}$; for example, the first two terms in the first column of the above equation cancel out. This shows that ${\bf LHS}={\bf RHS}$. 
	
	Finally, since for $1\leq s\leq r_{i-1}$ and $k< l \leq n$, as well as for $r_{i-1}< s \leq k$ and $r_i< l \leq n$, we have
	$$ \kappa_s -\kappa_l \leq 0. $$
	Therefore,
	\begin{equation*}
		\begin{split}
			{\bf RHS} = \sum_{s=1}^{r_{i-1}}\sum_{l=k+1}^{n}\frac{|\na h_{ls}|^2}{\kappa_s - \kappa_l} + \sum_{s=r_{i-1}+1}^{k}\sum_{l=r_i+1}^{n}\frac{|\na h_{ls}|^2}{\kappa_s - \kappa_l} \leq 0.
		\end{split}
	\end{equation*}
	This completes the proof of Lemma \ref{lem:key}.
\end{proof}

Now, combining Lemma \ref{lem:eqoflambda_k} and Lemma \ref{lem:key}, and using the facts that $\dot{\vphi} \geq 0$ and $\ddot{\vphi}\leq 0$, we have the following elliptic differential inequality.

\begin{lem} \label{lem:L-leq-0}
	Let $\Sigma^n \subset \R^{n+1}$ be a complete immersed two-sided mean convex self-expander. For a fixed index $k$ with $1\leq k<n$, let the index $i$ and the function $ \vphi = \vphi\left({\lambda_k}/{H}\right)$ be as before. Then, on each connected component of $\Sigma_A$, we have
	\begin{equation*}
		\begin{split}
			L\vphi &= \Delta^{\Sigma} \vphi + \la \na \vphi, x \ra + 2\lla \na \vphi, \frac{\na H}{H} \rra \\
			& = \ddot{\vphi}\left|\na \left( \frac{\lambda_k}{H} \right) \right|^2 + \frac{2\dot{\vphi}}{H}\lp \sum_{s=1}^{r_{i-1}}\sum_{l=k+1}^{n}\frac{|\na h_{ls}|^2}{\kappa_s - \kappa_l} + \sum_{s=r_{i-1}+1}^{k}\sum_{l=r_i+1}^{n}\frac{|\na h_{ls}|^2}{\kappa_s - \kappa_l} \rp\\
			&\leq 0.
		\end{split}
	\end{equation*}
\end{lem}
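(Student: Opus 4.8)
The statement is an immediate consequence of the two preceding lemmas, so the plan is to assemble them and then read off the sign. First I would fix a connected component $\Omega$ of $\Sigma_A$, on which the adapted moving frame of Section~\ref{sec:2.1} is available, the principal curvatures are smooth, and $E_A$ together with the multiplicities $d_1,\dots,d_m$ is constant; in particular the cluster index $i$ with $\kappa_k=\kappa_i'$ (equivalently $r_{i-1}<k\le r_i$) is well defined throughout $\Omega$. There $\lambda_k=\sum_{s=1}^{k}\kappa_s$ is smooth and, since mean convexity gives $H>0$, so is $\vphi=\vphi(\lambda_k/H)$, so that $L\vphi$ makes classical sense on $\Omega$.

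Next I would quote Lemma~\ref{lem:eqoflambda_k}, which already evaluates this operator:
\[
	L\vphi = \ddot{\vphi}\left|\na\left(\frac{\lambda_k}{H}\right)\right|^2 + \frac{2\dot{\vphi}}{H}\sum_{s=1}^{k}\sum_{\kappa_l\neq\kappa_s}\frac{|\na h_{ls}|^2}{\kappa_s-\kappa_l},
\]
and then substitute into the double sum the algebraic identity of Lemma~\ref{lem:key},
\[
	\sum_{s=1}^{k}\sum_{\kappa_l\neq\kappa_s}\frac{|\na h_{ls}|^2}{\kappa_s-\kappa_l} = \sum_{s=1}^{r_{i-1}}\sum_{l=k+1}^{n}\frac{|\na h_{ls}|^2}{\kappa_s-\kappa_l} + \sum_{s=r_{i-1}+1}^{k}\sum_{l=r_i+1}^{n}\frac{|\na h_{ls}|^2}{\kappa_s-\kappa_l},
\]
(with the usual convention that a sum over an empty index range vanishes, which covers the cases $i=1$ and $i=m$). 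This yields exactly the displayed identity for $L\vphi$ in the statement.

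It remains to check the sign. By the construction of the cutoff one has $\dot{\vphi}\ge 0$ and $\ddot{\vphi}\le 0$ everywhere — strictly, $\dot{\vphi}>0$ and $\ddot{\vphi}<0$ for $t<0$, while $\vphi$ and all its derivatives vanish for $t\ge 0$. Hence $\ddot{\vphi}\,|\na(\lambda_k/H)|^2\le 0$. In the two remaining double sums one has $\kappa_s-\kappa_l\le 0$: for $1\le s\le r_{i-1}$ and $k<l\le n$ because $\kappa_s\le\kappa_{i-1}'<\kappa_i'=\kappa_k\le\kappa_l$, and for $r_{i-1}<s\le k$ and $r_i<l\le n$ because $\kappa_s=\kappa_i'<\kappa_{i+1}'\le\kappa_l$. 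Thus every summand is $\le 0$, and since $\dot{\vphi}\ge 0$ and $H>0$ the term $\frac{2\dot{\vphi}}{H}(\,\cdots\,)$ is $\le 0$ as well. Adding the two nonpositive contributions gives $L\vphi\le 0$.

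I do not anticipate a genuine obstacle here: the analytic identity (Lemma~\ref{lem:eqoflambda_k}) and the combinatorial cancellation (Lemma~\ref{lem:key}) carry all the weight, and what is left is bookkeeping. The only points deserving a little care are verifying that the distinguished cluster index $i$ is constant on a connected component of $\Sigma_A$, and that the sign properties of $\dot{\vphi}$ and $\ddot{\vphi}$ extend to $t=0$ — so that the conclusion $L\vphi\le 0$ holds wherever $\vphi(\lambda_k/H)$ is differentiable on $\Sigma_A$, and not merely on the open set $\{\lambda_k<0\}$.
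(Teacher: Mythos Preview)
Your proposal is correct and follows exactly the paper's approach: the paper proves this lemma in a single sentence, stating that it follows by combining Lemma~\ref{lem:eqoflambda_k} with Lemma~\ref{lem:key} together with $\dot{\vphi}\ge 0$ and $\ddot{\vphi}\le 0$. Your write-up simply unpacks these steps with more detail (notably the explicit verification that $\kappa_s-\kappa_l<0$ in each of the two remaining sums, and the remarks on the edge cases $i=1$, $i=m$ and $t\ge 0$), all of which is accurate.
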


\begin{rmk} \label{rmk:no-int-inf}
	Using Lemma \ref{lem:key}, together with Remark \ref{rmk:eqoflambda_k}, Lemma \ref{lem:L-leq-0} also holds analogously for self-shrinkers and translators.
\end{rmk}

Recall that $\Sigma_A^k$ is an open dense subset of $\Sigma^k$, and by Lemma \ref{lem:smooth-lambda_k}, the function $\vphi = \vphi\left({\lambda_k}/{H}\right)$ is smooth on $\Sigma^{k}$. As an application of Lemma \ref{lem:L-leq-0}, we obtain the following result ruling out negative interior minima of $\vphi$ in $\Sigma^k$.

\begin{lem} \label{lem:noiinteriorinf}
	Let $\Sigma^n \subset \R^{n+1}$ be a complete immersed two-sided mean convex self-expander. Then, for a fixed index $k$ with $1\leq k<n$, the function $ \vphi = \vphi\left({\lambda_k}/{H}\right)$ cannot attain a negative infimum at an interior point of $\Sigma^k$.
\end{lem}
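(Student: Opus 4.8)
The plan is to argue by contradiction, using the elliptic differential inequality $L\vphi \le 0$ from Lemma \ref{lem:L-leq-0} together with Hopf's strong maximum principle. Suppose $\vphi = \vphi(\lambda_k/H)$ attains a negative infimum at some interior point $p_0 \in \Sigma^k$; since $\vphi$ is continuous and $\vphi(t) = 0$ for $t \ge 0$, the hypothesis that the infimum is negative forces $(\lambda_k/H)(p_0) < 0$, hence $\vphi(p_0) < 0$, so in fact $\vphi$ attains an interior \emph{minimum} (equal to this negative value) at $p_0$. Because $\dot\vphi > 0$ on $(-\infty, 0)$, the level $(\lambda_k/H)(p_0)$ is also the minimum value of $\lambda_k/H$ on $\Sigma^k$, attained at $p_0$.

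The first step is to localize to where everything is smooth. By Lemma \ref{lem:smooth-lambda_k}, $\lambda_k/H$ (and hence $\vphi$) is smooth on the open set $\Sigma^k$, so $p_0$ is a genuine smooth interior minimum of $\vphi$ on $\Sigma^k$. The operator $L = \Delta^\Sigma + \la \na\,\cdot\,, x\ra + 2\la \na\,\cdot\,, \na H/H\ra$ is a second-order linear elliptic operator with no zeroth-order term and with locally bounded (indeed smooth, using $H > 0$) coefficients near $p_0$. The one subtlety is that Lemma \ref{lem:L-leq-0} is stated only on connected components of $\Sigma_A$, i.e.\ on $\Sigma_A^k$, which is merely open and \emph{dense} in $\Sigma^k$ — not all of it. However, since $\vphi$ is $C^\infty$ on the full open set $\Sigma^k$ and the inequality $L\vphi \le 0$ holds on the dense subset $\Sigma_A^k$, by continuity of $L\vphi$ it holds on all of $\Sigma^k$. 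Thus $L\vphi \le 0$ on a neighborhood $V \subset \Sigma^k$ of $p_0$.

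The second step is the maximum principle itself. Since $\vphi$ attains an interior minimum at $p_0$ over $V$, and $L$ has no zeroth-order term, Hopf's strong minimum principle implies $\vphi \equiv \vphi(p_0)$ on the connected component of $V$ containing $p_0$; running the standard connectedness/open-closed argument, $\vphi$ is constant (equal to its negative infimum) on the connected component of $\Sigma^k$ containing $p_0$. Call this component $\Omega_0$; on $\Omega_0$ we have $\lambda_k/H \equiv c < 0$ constant, hence $\na(\lambda_k/H) \equiv 0$, and feeding this back into the equality in Lemma \ref{lem:L-leq-0} forces
\begin{equation*}
	\frac{2\dot\vphi}{H}\left( \sum_{s=1}^{r_{i-1}}\sum_{l=k+1}^{n}\frac{|\na h_{ls}|^2}{\kappa_s - \kappa_l} + \sum_{s=r_{i-1}+1}^{k}\sum_{l=r_i+1}^{n}\frac{|\na h_{ls}|^2}{\kappa_s - \kappa_l} \right) = 0
\end{equation*}
on $\Omega_0 \cap \Sigma_A$. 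Since $\dot\vphi > 0$, $H > 0$, and each denominator $\kappa_s - \kappa_l$ is strictly negative (as $s \le k < l$ with $\kappa_k < \kappa_{k+1} \le \kappa_l$), every term is $\le 0$, so each must vanish: $\na h_{ls} = 0$ for $1 \le s \le k < l \le n$ (on the dense set $\Sigma_A$, hence everywhere by continuity). Combined with Codazzi, this says the second fundamental form has a block-diagonal parallel structure splitting off the first $k$ eigenvalues.

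The final step — which I expect to be the real obstacle — is to extract a contradiction from this rigid situation. The natural route: the vanishing of these mixed gradient components, together with the evolution equation $\mL A + A(|A|^2+1) = 0$ from Lemma \ref{lem:preexpander}, should force $\lambda_k$ to satisfy its own strict drift-Laplacian identity; indeed from Lemma \ref{lem:localcompute} and $\na h_{ls} = 0$ we get $\Delta^\Sigma \lambda_k = \sum_{s=1}^k \Delta h_{ss}$, and then $\mL \lambda_k + \lambda_k(|A|^2+1) = 0$, i.e.\ $\mL\lambda_k = -\lambda_k(|A|^2+1) > 0$ since $\lambda_k = cH < 0$. But $\lambda_k$ attains its minimum (a negative value) at an interior point of $\Omega_0$, so at that point $\Delta^\Sigma \lambda_k \ge 0$ and $\na\lambda_k = 0$, giving $\mL\lambda_k \ge 0$ — consistent, not yet a contradiction. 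The cleaner contradiction is that $\lambda_k \equiv cH$ with $c$ the negative \emph{infimum} over $\Sigma^k$, combined with the fact that $\Omega_0$ cannot be all of $\Sigma$: either $\overline{\Omega_0}$ meets $\partial\Sigma^k = \{\kappa_k = \kappa_{k+1}\}$, where by continuity $\lambda_k/H = c$ persists but the component structure degenerates, and one propagates the constancy across to get $\lambda_k/H \equiv c$ on a larger set and ultimately on all of $\Sigma$ — contradicting the asymptotically-conical decay (Lemma \ref{lem:Xie-Yu}, to be used in Section \ref{sec:4}) which forces $\lambda_k/H \to$ a nonnegative limit at infinity; or $\Omega_0 = \Sigma^k$ is closed as well as open, hence $\Sigma^k = \Sigma$ and the same asymptotic contradiction applies. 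I would therefore structure the proof to push the constancy of $\lambda_k/H$ as far as the maximum principle allows and then defer the global contradiction to the asymptotic analysis, or — more self-containedly — observe that $\mL\lambda_k = -\lambda_k(|A|^2 + 1)$ with $\lambda_k < 0$ makes $\lambda_k$ a supersolution of a drift-Laplace equation that cannot attain an interior minimum unless it is constant, and then the genuine strict inequality $-\lambda_k(|A|^2+1) > 0$ at the minimum point directly contradicts $\Delta^\Sigma\lambda_k(p_0) \ge 0$, $\na\lambda_k(p_0) = 0$ once one notes $|A|^2 + 1 > 0$ strictly. That last observation, properly bookkept, should close the argument.
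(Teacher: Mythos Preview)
Your first two steps (extending $L\vphi \le 0$ from $\Sigma_A^k$ to all of $\Sigma^k$ by density and smoothness, then applying the strong maximum principle to get $\lambda_k/H \equiv c < 0$ on the component $\Omega_0$, and then reading off $\na h_{ls} = 0$ for $1 \le s \le k < l \le n$ from the vanishing of the non-positive term in Lemma \ref{lem:L-leq-0}) are correct and coincide with the paper's argument.

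The genuine gap is in your final step. Neither of your two proposed contradictions works. Your ``asymptotic'' route invokes Lemma \ref{lem:Xie-Yu}, but Lemma \ref{lem:noiinteriorinf} does \emph{not} assume $\Sigma$ is asymptotically conical, so you cannot appeal to decay at infinity here. Your ``self-contained'' route misreads the signs: from $\na h_{ls} = 0$ you correctly obtain $\mL\lambda_k = -\lambda_k(|A|^2+1) > 0$ on $\Omega_0$, but this makes $\lambda_k$ a \emph{sub}solution, not a supersolution, and subsolutions are perfectly happy to attain interior minima; at such a point $\Delta^\Sigma\lambda_k \ge 0$ and $\na\lambda_k = 0$ give $\mL\lambda_k \ge 0$, which is consistent with (not contradicted by) $\mL\lambda_k > 0$. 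Moreover, $\lambda_k = cH$ need not attain an interior minimum on $\Omega_0$ at all, since that would require $H$ to attain an interior maximum there.

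The missing idea, which the paper supplies, is to push the Codazzi information one step further and show $\na\lambda_k \equiv 0$ on $\Omega_0 \cap \Sigma_A$ (not just at one point). For $l > k$ and $j \le k$, Codazzi and $\na h_{lj} = 0$ give $\na_l\kappa_j = \na_l h_{jj} = \na_j h_{lj} = 0$, so $\na_l\lambda_k = 0$. For $l \le k$ and $j > k$, the same trick gives $\na_l\kappa_j = \na_j h_{jl} = 0$, so $\na_l(\kappa_{k+1}+\cdots+\kappa_n) = 0$; combined with $\na\lambda_k = \frac{c}{1-c}\na(\kappa_{k+1}+\cdots+\kappa_n)$ (which follows from $\na(\lambda_k/H)=0$), this gives $\na_l\lambda_k = 0$ as well. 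Hence $\na\lambda_k \equiv 0$, and then $\na H \equiv 0$ from $\lambda_k = cH$. Now the self-expander equation $\mL H + H(|A|^2+1) = 0$ collapses to $H(|A|^2+1) = 0$, forcing $H \equiv 0$ and contradicting mean convexity.
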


\begin{proof}
	We shall prove the lemma by contradiction. Suppose that the function $\vphi = \vphi\left({\lambda_k}/{H}\right)$ attains a negative infimum at some interior point $p_0\in \Sigma^{k}$.
	
	On the one hand, by Lemma \ref{lem:L-leq-0}, on each connected component of
	$$ \Sigma^{k}_A = \Sigma^{k}  \cap \Sigma_A, $$
	we have $L\vphi  \leq 0$. On the other hand, by Lemma \ref{lem:smooth-lambda_k}, the function $\vphi = \vphi\left({\lambda_k}/{H}\right)$ is smooth on $\Sigma^{k}$. Since $ \Sigma^{k}_A$ is open dense in $\Sigma^k$, the inequality extends to all of $\Sigma^k$, and thus
	\begin{equation*}
		L\vphi  \leq 0,
	\end{equation*}
	holds on the entirety of $\Sigma^{k}$. Hence, by the strong maximum principle, the function $\vphi \left( {\lambda_k}/{H} \right)$ must be a negative constant on the connected component of $\Sigma^k$ that contains $p_0$. In particular, since $H>0$ and $1\leq k<n$, it follows from the definition of the cutoff function $\vphi$ that on each connected component of $ \Sigma^{k}_A$, we have
	\begin{equation}
		\frac{\lambda_k}{H} \equiv \epsilon_0 < 0. \label{eq:no-int-inf_1}
	\end{equation}
	Moreover, as $\dot{\vphi}$ and $\ddot{\vphi}$ are both bounded away from 0 on $ [t_1, t_2]$ for any $t_1<t_2<0$, by Lemma \ref{lem:L-leq-0}, we have
	\begin{equation}
		\na \lp \frac{\lambda_k}{H} \rp = \frac{H\na \lambda_k - \lambda_k \na H}{H^2} = \frac{\na \lambda_k - \epsilon_0\na H}{H} \equiv 0, \label{eq:no-int-inf_2}
	\end{equation}
	and, without loss of generality, assuming $r_i \neq r_1 \neq r_m$, we also have
	\begin{equation} \label{eq:no-int-inf_3}
		\sum_{s=1}^{r_{i-1}}\sum_{l=k+1}^{n}\frac{|\na h_{ls}|^2}{\kappa_s - \kappa_l} + \sum_{s=r_{i-1}+1}^{k}\sum_{l=r_i+1}^{n}\frac{|\na h_{ls}|^2}{\kappa_s - \kappa_l} \equiv 0,
	\end{equation}
	on each connected component of $ \Sigma^{k}_A$ as well.
	
	From (\ref{eq:no-int-inf_3}), we conclude that
	\begin{equation*}
		\na h_{ls} =0,
	\end{equation*}
	whenever $1\leq s\leq r_{i-1}$ and $k<l\leq n$, as well as whenever $r_{i-1}<s\leq k$ and $r_i<l\leq n$. Moreover, by Lemma \ref{lem:localcompute}, if $l\neq s$ and $\kappa_l = \kappa_s$, then $\na h_{ls} = 0$. In particular, this implies that
	\begin{equation*}
		\na h_{ls} =0,
	\end{equation*}
	for all $r_{i-1}<s \leq k$ and $k<l\leq r_i$. Combining the above observations, whenever $1\leq s \leq k$ and $k<l\leq n$, we have
	\begin{equation} \label{eq:no-int-inf_1sk-kln}
		\na_j h_{ls} =0, 	\text{ for all } 1\leq j \leq n.
	\end{equation}
	
	Next, for $1\leq j\leq k$ and $k<l\leq n$, (\ref{eq:no-int-inf_1sk-kln}) and the Codazzi equations yield
	\begin{equation*}
		\na_l \kappa_j = \na_l h_{jj} = \na_j h_{lj} = 0.
	\end{equation*}
	This implies that
	\begin{equation} \label{eq:nal>k}
		\na_l \lambda_k =\sum_{j=1}^{k}\na_l \kappa_j=0,
	\end{equation}
	whenever $k<l\leq n$. 
	
	Note that by (\ref{eq:no-int-inf_2}), we have
	\begin{equation} \label{eq:nalambda-naH}
		\na \lambda_k - \epsilon_0\na H=0,
	\end{equation}
	which can be rewritten as
	\begin{equation*}
		\na \lambda_k -\epsilon_0\na \lambda_k - \epsilon_0 \na(\kappa_{k+1}+\cdots+\kappa_n) = 0.
	\end{equation*}
	Since $\epsilon_0<0$, we have
	\begin{equation} \label{eq:no-int-inf_e/1-e}
		\na \lambda_k = \frac{\epsilon_0}{1-\epsilon_0} \na(\kappa_{k+1}+\cdots+\kappa_n).
	\end{equation}
	
	Now, consider $\na_l \lambda_k$ for $1\leq l \leq k$. For $k < j \leq n$ and $1\leq l \leq k$, by the Codazzi equations and (\ref{eq:no-int-inf_1sk-kln}), we have
	\begin{gather*}
		\na_l \kappa_j= \na_l h_{jj} = \na_{j}h_{jl} =0,
	\end{gather*}
	which, by (\ref{eq:no-int-inf_e/1-e}), implies
	\begin{equation} \label{eq:nal<=k}
		\na_l \lambda_k = \na_l (\kappa_{k+1}+\cdots+\kappa_n)=0,
	\end{equation}
	whenever $1\leq l \leq k$. Combining (\ref{eq:nal>k}) and (\ref{eq:nal<=k}), we conclude that $\na \lambda_k \equiv 0$. Then, by (\ref{eq:nalambda-naH}), it follows that $\na H \equiv 0$, i.e., the mean curvature $H$ is constant on each connected component of $\Sigma_A^{k}$.
	
	However, by Lemma \ref{lem:preexpander}, we have
	$$ \mL H + H(|A|^2+1)=0, $$
	hence $H\equiv 0$, which is a contradiction to the mean convex assumption. Therefore, the function $\varphi = \varphi\left( {\lambda_k}/{H} \right)$ cannot attain a negative infimum at an interior point of $\Sigma^{k}$. This completes the proof of the lemma.
\end{proof}

\begin{rmk} \label{rmk:no-int-selfshrinker}
	By Remark \ref{rmk:no-int-inf} and a similar argument in Lemma \ref{lem:noiinteriorinf}, the above Lemma also holds for mean convex self-shrinkers and translators.
\end{rmk}

\medskip
\section{Proof of the main theorem} \label{sec:4}
In this section, we complete the proof of Theorem \ref{thm:selfexpander}. We shall first establish weak convexity via a contradiction argument, adapting the approach of \cite{SX:20, Xie-Yu:23}. Once weak convexity is obtained, we apply Hamilton's strong maximum principle \cite[Lemma 8.2]{Ham:86} to deduce strict convexity.

We start with the following elementary observation.

\begin{lem} \label{lem:infkappa&lambda}
	 Let $\Sigma^n \subset \R^{n+1}$ be a complete immersed two-sided mean convex hypersurface. Suppose the function ${\kappa_1}/{H}$ attains its infimum at $p_0 \in \Sigma$, and the multiplicity of $\kappa_1$ at $p_0$ is $k$ with $1\leq k<n$. Then the function ${\lambda_k}/{H}$ also attains its infimum at $p_0$.
\end{lem}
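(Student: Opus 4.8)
The plan is to show that $\lambda_k/H$ is bounded below on all of $\Sigma$ by the constant $k\,\epsilon_0$, where $\epsilon_0 := \inf_\Sigma (\kappa_1/H) = \kappa_1(p_0)/H(p_0)$, and that this lower bound is achieved at $p_0$; this immediately yields the claim. Note that no differentiability of the principal curvatures is needed here — the argument rests only on the pointwise ordering $\kappa_1 \le \kappa_2 \le \cdots \le \kappa_n$ (valid everywhere on $\Sigma$ by continuity of the $\kappa_i$) and the mean convexity $H>0$.

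First I would record the pointwise inequality. For each $1 \le s \le k$ and each $p \in \Sigma$, the ordering gives $\kappa_s(p) \ge \kappa_1(p)$, and since $H(p) > 0$ this yields
\begin{equation*}
	\frac{\kappa_s(p)}{H(p)} \ge \frac{\kappa_1(p)}{H(p)} \ge \epsilon_0.
\end{equation*}
Summing over $s = 1, \dots, k$ gives $\lambda_k(p)/H(p) \ge k\,\epsilon_0$ for every $p \in \Sigma$.

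Second, I would evaluate at $p_0$. By hypothesis the multiplicity of $\kappa_1$ at $p_0$ is exactly $k$, so $\kappa_1(p_0) = \kappa_2(p_0) = \cdots = \kappa_k(p_0)$, hence $\lambda_k(p_0) = k\,\kappa_1(p_0)$ and therefore
\begin{equation*}
	\frac{\lambda_k(p_0)}{H(p_0)} = k\,\frac{\kappa_1(p_0)}{H(p_0)} = k\,\epsilon_0.
\end{equation*}
Combined with the previous step, $\lambda_k/H$ attains its infimum $k\,\epsilon_0$ at $p_0$, which is what we wanted.

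There is essentially no serious obstacle in this lemma; it is the elementary observation promised in the introduction. The only point that warrants a moment's care is making sure the inequality $\kappa_s/H \ge \kappa_1/H$ is used with the globally defined ordered (continuous) principal curvatures rather than the smooth local labels on $\Sigma_A$, so that the bound $\lambda_k/H \ge k\epsilon_0$ genuinely holds on all of $\Sigma$ and not merely on $\Sigma_A$; this is automatic since $\kappa_1,\dots,\kappa_n$ are continuous on $\Sigma$ and the ordering holds everywhere.
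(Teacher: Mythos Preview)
Your proof is correct and is essentially the same as the paper's: both rest on the two facts $\lambda_k(p_0)=k\kappa_1(p_0)$ (from the multiplicity hypothesis) and $\lambda_k(p)\ge k\kappa_1(p)$ (from the ordering), combined with $H>0$. The paper packages this as a contradiction argument while you give the direct version, but the content is identical.
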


\begin{proof}
	We argue by contradiction. Suppose that there exists a point $p \neq p_0$ such that
	\begin{equation*}
		\frac{\lambda_k}{H} (p) < \frac{\lambda_k}{H}(p_0).
	\end{equation*}
	Since $H>0$, we have
	\begin{equation*}
		H(p_0)\lambda_k(p) < H(p)\lambda_k(p_0).
	\end{equation*}
	By the multiplicity assumption of $\kappa_1$ at $p_0$, we have
	\begin{equation*}
		\lambda_k(p_0) =\sum_{s=1}^{k} \kappa_s(p_0) = k\kappa_1(p_0).
	\end{equation*}
	Moreover, since $\kappa_1(p) \leq \cdots \leq \kappa_k(p)$ and $H>0$, it follows that
	\begin{equation*}
		H(p_0)k\kappa_1(p)\leq H(p_0)\sum_{s=1}^{k}\kappa_s(p) < H(p)\lambda_k(p_0) = H(p)k\kappa_1(p_0).
	\end{equation*}
	Thus, we have
	\begin{equation*}
		H(p_0)\kappa_1(p)<H(p)\kappa_1(p_0),
	\end{equation*}
	which implies
	\begin{equation*}
		\frac{\kappa_1(p)}{H(p)} <\frac{\kappa_1(p_0)}{H(p_0)}.
	\end{equation*}
	This contradicts the assumption that $\kappa_1/H$ attains its infimum at $p_0$. Therefore, the lemma follows.
\end{proof}

We are now ready to complete the proof of Theorem \ref{thm:selfexpander}. 

\medskip
\noindent {\bf Proof of Theorem \ref{thm:selfexpander}.} 
We first claim that any complete immersed, two-sided, mean convex self-expander asymptotic to weakly convex cones with positive mean curvature must be weakly convex. Indeed, following \cite{SX:20,Xie-Yu:23}, we argue by contradiction. Suppose instead that the claim fails. Then the set
\begin{equation*}
	\Sigma^-:=\{p\in \Sigma: \kappa_1(p)<0\}
\end{equation*}
is nonempty, and we have
\begin{equation*}
	\epsilon_1 :=\inf_{\Sigma}\left( \frac{\kappa_1}{H} \right) <0.
\end{equation*}

\noindent {\bf Case 1: Interior Infimum.} Suppose the negative infimum $\epsilon_1$ is attained at some point $p_0\in \Sigma$, and that the multiplicity of the smallest principal curvature $\kappa_1<0$ at $p_0$ is $k$. Since $H>0$, it follows that $\kappa_n >0$, and thus $1\leq k<n$. That is, at $p_0$,
\begin{equation*}
	\kappa_1=\kappa_2=\cdots =\kappa_k<\kappa_{k+1} \leq \kappa_n.
\end{equation*}
Then, by Lemma \ref{lem:infkappa&lambda}, the function
\begin{equation*}
	\vphi \left( \frac{\lambda_k}{H} \right)
\end{equation*}
also attains its negative infimum at $p_0$. Now, recall
$$ \Sigma^{k} =\{p\in \Sigma: \kappa_k(p) <\kappa_{k+1}(p)\}. $$
Clearly, $p_0 \in \Sigma^{k}$, and by Lemma \ref{lem:smooth-lambda_k}, $\vphi \left( {\lambda_k}/{H} \right)$ is smooth on  $\Sigma^{k}$. However, by Lemma \ref{lem:noiinteriorinf}, the function $\vphi \left( {\lambda_k}/{H} \right)$ cannot attain a negative infimum at an interior point of $\Sigma^k$. This yields a contradiction, and thus this case is ruled out.

\smallskip
\noindent {\bf Case 2: Infinity Infimum.} Assume instead that $\kappa_1/H$ attains its negative infimum at infinity. We follow the argument in the proof of \cite[Theorem 1.2]{Xie-Yu:23}. Then there exists a sequence $\{p_s\}$ of points in $\Sigma$ such that as $s\rightarrow \infty$ we have $p_s \rightarrow \infty$ and
\begin{equation} \label{eq:negativelimit}
	\lim_{s\rightarrow \infty}\frac{\kappa_1}{H}(p_s) =\epsilon_1 <0.
\end{equation}

However, note that the asymptotic cone $\mC$  has positive mean curvature $H>0$ and is weakly convex, with smallest principal curvature $\tilde{\kappa}_1=0$ in the radial direction. It follows from Lemma \ref{lem:Xie-Yu} that $\kappa_1(p_s)=O(r^{-3})$ and $H(p_s) \approx r^{-1}$. Thus,
\begin{equation*}
	\lim_{s\rightarrow \infty} \frac{\kappa_1}{H}(p_s)=0,
\end{equation*}
which contradicts (\ref{eq:negativelimit}). Hence, {\bf Case 2} is ruled out.

Alternatively, since $\mC$ is mean convex with positive mean curvature, the ratio ${\tilde{\kappa}_1}/{\tilde{H}}$ is well defined, where $\tilde{\kappa}_1$ and $\tilde{H}$ denote the smallest principal curvature and the mean curvature of $\mC$, respectively. For the sequence $\{p_s\} \subset \Sigma$, by Lemma \ref{lem:Ding}, we may then find a corresponding sequence $\{ \tilde{p}_s\}\subset \mC $ such that
\begin{equation} \label{eq:limit}
	\lim_{s\rightarrow \infty} \frac{\kappa_1}{H}(p_s) = \lim_{s\rightarrow \infty} \frac{\tilde{\kappa}_1}{\tilde{H}} (\tilde{p}_s).
\end{equation}
But since $\mC$ is also weakly convex, its smallest principal curvature in the radial direction satisfies $\tilde{\kappa}_1(\tilde{p}_s)\equiv 0$, which implies
\begin{equation*}
	\lim_{s\rightarrow \infty} \frac{\kappa_1}{H}(p_s) =0,
\end{equation*}
again contradicting (\ref{eq:negativelimit}). Thus, {\bf Case 2} is ruled out.

Combining {\bf Case 1} and {\bf Case 2}, we conclude that $\Sigma^{-}$ is empty. Therefore, $\Sigma$ must be weakly convex.

\smallskip
Finally, given that the asymptotically conical self-expander $\Sigma$ is weakly convex, we shall prove that $\Sigma$ must be strictly convex. To begin, following the proof of \cite[Theorem 1.3]{Cao-Xie:25}, we observe that the null space of the second fundamental form, $\ker(A)$, is invariant under parallel translation. Indeed, consider the mean curvature flow $\{\Sigma_t\} = \{ \sqrt{t}\Sigma\}$ induced by the self-expander with $\Sigma_1=\Sigma$ for $t\in [1,2]$. Note that the evolution equation of the second fundamental form is given by (see, e.g., \cite[Theorem 2.3]{Ham:95})
\begin{equation*}
	\pa_t A(t)=\Delta A(t) + |A(t)|^2A(t).
\end{equation*}
Since $A(t)\geq 0$, it follows that $|A(t)|^2A(t)\geq 0$, for all $t \in [1,2]$. Thus, by Hamilton's strong maximum principle (see, e.g., \cite[Theorem 2.2.1]{Cao-Zhu:06}), there exists an interval $1 < t < 1+\delta$ over which the rank of $A(t)$ is constant, and $\ker(A(t))$ is invariant under parallel translation. Because the self-expander is self-similar, i.e., $\Sigma_t= \sqrt{t}\Sigma$ with $\Sigma_1=\Sigma$, we conclude that the rank of $A=A(1)$ is locally constant, and $\ker(A)=\ker(A(1))$ is invariant under parallel translation.

\smallskip
\noindent {\bf Claim:} The self-expander $\Sigma$ is strictly convex.

\noindent {\it Proof of Claim:} We argue by contradiction. Suppose that $\Sigma$ is not strictly convex. Since $\Sigma$ is weakly convex, there exists a point $p_0\in \Sigma$ such that the smallest principal curvature $\kappa_1(p_0)=0$. Given the rank of the second fundamental form $A$ is locally constant, it follows that $\kappa_1\equiv 0$ on a neighborhood $U$ of $p_0$. Moreover, as $\ker(A)$ is invariant under parallel translation, there exists an orthogonal decomposition of the tangent bundle $TU=V_1\oplus V_2$, where $V_1=\ker(A)$ with $\dim(V_1) \geq 1$, and both $V_1$ and $V_2$ are invariant under parallel translation. By \cite[Lemma 9.1]{Ham:86}, $\Sigma$ locally splits off a line. Since self-expanders are real analytic, by completeness, the local splitting extends globally. However, this contradicts the assumption that $\Sigma$ is asymptotically conical: any Euclidean factor of $\Sigma$ would force the asymptotic cone $\mC$ to split off a Euclidean factor, thereby violating the regularity assumption that $\mC$ has only an isolated singularity at the origin, unless $\mC$ is a hyperplane. In this latter case, a hyperplane is not a cone with positive mean curvature, which is a contradiction; alternatively, by \cite[Theorem 4.2]{Ding:20}, the self-expander $\Sigma$ must itself be a hyperplane, which contradicts the assumption that $\Sigma$ is mean convex. Consequently, $\Sigma$ must be strictly convex.

This completes the proof of {\bf Claim}, and thereby concludes the proof of Theorem \ref{thm:selfexpander}.
\hfill $\Box$


\end{document}